\DeclareMathAlphabet{\mathpzc}{OT1}{pzc}{m}{it}
\newcommand{\abssec}[1]{\noindent\normalsize {\bfseries #1\quad }\ignorespaces}
\renewenvironment{abstract}{\abssec{Abstract}}{\par\vspace{.1in}}
\newenvironment{keywords}{\abssec{Key Words}}{\par\vspace{.1in}}
\newenvironment{AMSMOS}{\abssec{AMS subject
		classification}}{\par\vspace{.1in}}
\newtheorem{theorem}{Theorem}[section]
\newtheorem{proposition}[theorem]{Proposition}
\newtheorem{lemma}[theorem]{Lemma}
\newtheorem{corollary}[theorem]{Corollary}
\newtheorem{definition}[theorem]{Definition}
\newtheorem{example}[theorem]{Example}
\newtheorem{remark}[theorem]{Remark}
\newtheorem{assumption}[theorem]{Assumption}
\def\N{{\mathbb N}}
\def\cN{\mathcal{N}}
\newcommand{\Betrag}[1]{\left|#1\right|}
\newcommand{\klg}[1]{\{#1\}}
\def\cN{{\mathcal N}}
\def\cV{{\mathcal V}}
\def\RR{\mathbb{R}}
\def\Om{\Omega}
\def\bOm{\overline{\Om}}
\def\pOm{\partial\Omega}
\def\sgn{\operatorname{sgn}}
\numberwithin{equation}{section}
\begin{document}

\title{A note on semilinear fractional elliptic equation: analysis and discretization\thanks{The work of the first and second author is partially supported by  NSF grant DMS-1521590.  The work of the third author is partially supported by the Air Force Office of Scientific Research under the Award No: FA9550-15-1-0027}}

\author{Harbir Antil \thanks{Department of Mathematical Sciences, George Mason University, Fairfax, VA 22030, USA, \texttt{hantil@gmu.edu}}
\and Johannes Pfefferer \thanks{Chair of Optimal Control, Center of Mathematical Sciences, Technical University of Munich, Boltzmannstra{\ss}e 3, 85748 Garching by Munich, Germany, \texttt{pfefferer@ma.tum.de}}
\and Mahamadi Warma \thanks{University of Puerto Rico  (Rio Piedras Campus), College of Natural Sciences,
Department of Mathematics, PO Box 70377 San Juan PR
00936-8377 (USA), \texttt{mahamadi.warma1@upr.edu, mjwarma@gmail.com}}}

\date{\today}

\maketitle

\begin{abstract}
	In this paper we study existence, regularity, and approximation of solution to a fractional semilinear elliptic equation of order $s \in (0,1)$. We identify minimal conditions on the nonlinear term and the source which leads to existence of weak solutions and uniform $L^\infty$-bound on the solutions. Next we realize the fractional Laplacian as a Dirichlet-to-Neumann map via the Caffarelli-Silvestre extension. We introduce a first-degree tensor product finite elements space to approximate the truncated problem. We derive a priori error estimates and conclude with an illustrative numerical example.
\end{abstract}

\begin{AMSMOS}
	35S15, 26A33, 65R20, 65N12, 65N30
\end{AMSMOS}

\begin{keywords}
Spectral fractional Laplace operator, semi-linear elliptic problems, regularity of weak solutions, discretization, error estimates.
\end{keywords}

\section{Introduction}

Let $\Omega\subset\RR^N$ be a bounded open set with boundary $\pOm$. In this paper we investigate the existence, regularity, and finite element approximation of weak solutions of the following  semilinear Dirichlet problem

\begin{equation}\label{ellip-pro}
\begin{cases}
(-\Delta_D)^su +f(x,u)=g\;\;&\mbox{ in }\;\Omega,\\
u=0&\mbox{ on }\;\pOm.
\end{cases}
\end{equation} 
Here, $g$ is a given measurable function on $\Omega$, $f:\;\Omega\times\RR\to\RR$ is measurable and satisfies certain conditions (that we shall specify later), $0<s<1$ and $(-\Delta_D)^s$ denotes the spectral fractional Laplace operator, that is, the fractional $s$ power of the realization in $L^2(\Om)$ of the Laplace operator with zero Dirichlet boundary condition on $\pOm$. 

Notice that $(-\Delta_D)^s$ is a nonlocal operator and $f$ is nonlinear with respect to $u$.  This makes it  challenging to identify the minimum assumptions on $\Omega$, $f$ and $g$ in the study of the existence, uniqueness, regularity and the numerical analysis of the system. The later is the main objective of our paper. 

When $f$ is linear in $u$, problems of type \eqref{ellip-pro} have received a great deal of attention. See \cite{Caf3} for results in $\RR^N$ and \cite{NALD,CaSt,CDDS:11,ST:10} for results in bounded domains. However, \cite{Caf3,CaSt,ST:10} only deal with the linear problems, on the other hand \cite{NALD,CDDS:11} deal with a different class of semilinear problems and assumes $\Omega$ and $f$ to be smooth. We refer to \cite{NOS} where a numerical scheme to approximate the linear problem was first established. To the best of our knowledge our paper is the first work addressing the existence, regularity, and numerical approximation of \eqref{ellip-pro} with almost minimum conditions on $\Omega$, $f$ and $g$. 

We use \emph{Musielak-Orlicz spaces}, endowed with Luxemburg norm, to deal with the nonlinearity. Using the Browder-Minty theorem, we first show the existence and uniqueness of a weak solution. Additional integrability condition on $g$ brings the solution in $L^\infty(\Omega)$. For the latter result, we apply a well-known technique due to Stampacchia. However, when $\Omega$ has a Lipschitz continuous boundary and $f$ is locally Lipschitz continuous we illustrate the regularity shift. For completeness we also derive the H\"older regularity of solution for smooth $\Omega$. 

Numerical realization of nonlocal operators poses various challenges for instance, direct discretization of \eqref{ellip-pro}, by using finite elements, requires access to eigenvalues and eigenvectors of $(-\Delta_D)$ which is an intractable problem in general domains. Instead we use the so-called Caffarelli-Silvestre extension to realize the fractional power $(-\Delta_D)^s$. Such an approach is a more suitable choice for numerical methods, see \cite{NOS} for the linear case. The extension idea was introduced by Caffarelli and Silvestre in $\RR^N$ \cite{Caf3} 
and its extensions to bounded domains is given in e.g. \cite{CDDS:11,ST:10}. The extension says that fractional powers $(-\Delta_D)^s$ of the spatial operator $-\Delta_D$ can be realized as an operator that maps a Dirichlet boundary condition to a Neumann condition via an extension problem on the semi-infinite cylinder $\mathcal{C} = \Omega \times (0,\infty)$, that is, a Dirichlet-to-Neumann operator. See Section~\ref{s:CS} for more details. 

We derive a priori finite element error estimates for our numerical scheme. Our proof requires the solution to a discrete linearized problem to be uniformly bounded in $L^\infty(\Omega)$, which can be readily derived by using the inverse estimates and under the assumption $s>(N-2)/2$. As a result, when $N \ge 3$, we only have error estimates in case $s>(N-2)/2$. We notice that no restriction on $s$ is needed when $N \le 2$. In summary we are only limited by the $L^\infty(\Omega)$ regularity of the solution to a discrete linearized problem when $N \ge 3$.

Recently, fractional order PDEs have made a remarkable appearance in various scientific disciplines, and have received a great deal of attention. For instance, image processing \cite{GH:14}; nonlocal electrostatics \cite{ICH}; biophysics \cite{bio}; chaotic dynamical systems \cite{MR1604710}; finance \cite{MR2064019}; mechanics \cite{atanackovic2014fractional}, where they are used to model viscoelastic behavior \cite{MR2035411}, turbulence \cite{wow,NEGRETE} and the hereditary properties of materials \cite{MR1926470}; 
 diffusion processes \cite{Abe2005403,PSSB:PSSB2221330150}, in particular processes in disordered media, where the disorder may change the laws of Brownian motion and thus leads to anomalous diffusion \cite{MR1736459,MR1081295} and many others \cite{BV,MR2025566}. In view of the fact that most of the underlying physics in the aforementioned applications can be described by nonlinear PDEs, it is natural to analyze a prototypical semilinear PDE given in \eqref{ellip-pro}. 

The paper is organized as follows: In Section~\ref{s:spec} we provide definitions of the fractional order Sobolev spaces and the spectral Dirichlet Laplacian. These results are well known. 
Section~\ref{s:orl} is devoted to essential properties of Orlicz spaces. We also specify assumptions on $f$ and state several embedding results which are due to Sobolev embedding theorems. Our main results begin in Section~\ref{sec:weaksol}, where we first show existence and uniqueness of a weak solution $u$ to the system \eqref{ellip-pro} in Proposition~\ref{prop-exis} and later with additional integrability assumption on $g$ we obtain uniform $L^\infty$-bound on $u$ in Theorem~\ref{theo-bound}. When $\Omega$ is smooth we derive the H\"older regularity of $u$ in Corollary~\ref{cor-212}. In case $\Omega$ has a Lipschitz continuous boundary and $f$ is locally Lipschitz continuous we deduce regularity shift on $u$ in Corollary~\ref{ellip-regula}. We state the extension problem in Section~\ref{s:CS} and show the existence and uniqueness of a solution $\mathcal{U}$ to the extension problem on $\mathcal{C}:=\Omega\times(0,\infty)$ in Lemma~\ref{lem:CS}. We notice that $u = \mathcal{U}(\cdot,0)$. In Section~\ref{s:disc} we  begin the numerical analysis of our problem. We first derive the energy norm and the $L^2$-norm a priori error estimates for an intermediate linear problem in Lemma~\ref{lemma:linear}. This is followed by a uniform $L^\infty$-bound on the discrete solution to an intermediate linear problem in Lemma~\ref{lemma:Linftybounded}. We conclude with the error estimates for our numerical scheme to solve \eqref{ellip-pro} in Theorem~\ref{theorem:semilinear} and a numerical example.

\section{Analysis of the semilinear elliptic problem}\label{preli}

Throughout this section without any mention, $\Omega\subset\RR^N$ denotes an arbitrary bounded open set with boundary $\pOm$.  For each result, if a regularity of $\Omega$ is needed, then we shall specify and if no specification is given, then we mean that the result holds without any regularity assumption on the open set.

\subsection{The spectral fractional Laplacian}\label{s:spec}

Let $H_0^{1}(\Omega)=\overline{\mathcal D(\Omega)}^{H^{1}(\Omega)}$ where
\begin{align*}
H^{1}(\Omega)=\{u\in L^2(\Omega):\;\int_{\Omega}|\nabla u|^2\;dx<\infty\}
\end{align*}
is the first order Sobolev space endowed with the norm
\begin{align*}
\|u\|_{H^{1}(\Omega)}=\left(\int_{\Omega}|u|^2\;dx+\int_{\Omega}|\nabla u|^2\;dx\right)^{\frac 12}.
\end{align*}
Let $-\Delta_D$ be the realization on $L^2(\Omega)$ of the Laplace operator with the Dirichlet boundary condition. That is, $-\Delta_D$ is the positive and self-adjoint operator on $L^2(\Omega)$ associated with the closed, bilinear symmetric form
\begin{align*}
\mathcal A_D(u,v)=\int_{\Omega}\nabla u\cdot\nabla v\;dx,\;\;u,v\in H_0^{1}(\Omega),
\end{align*}
in the sense that
\begin{equation*}
\begin{cases}
D(\Delta_D)=\{u\in W_0^{1,2}(\Omega):\;\exists\;w\in L^2(\Omega),\; \mathcal A_D(u,v)=(w,v)_{L^2(\Omega)}\;\forall\;v\in H_0^{1}(\Omega)\},\\
-\Delta_Du=w.
\end{cases}
\end{equation*}
For instance if $\Omega$ has a smooth boundary, then $D(\Delta_D)=H^{2}(\Omega)\cap H_0^{1}(\Omega)$, where
\begin{align*}
H^2(\Omega):=\{u\in H^1(\Omega),\partial_{x_j}u\in H^1(\Omega),\; j=1,2,\ldots,N\}.
\end{align*}
It is well-known that $-\Delta_D$ has a compact resolvent and it eigenvalues form a non-decreasing sequence $0<\lambda_1\le\lambda_2\le\cdots\le\lambda_n\le\cdots$ satisfying $\lim_{n\to\infty}\lambda_n=\infty$. We denote by $\varphi_n$ the  orthonormal eigenfunctions associated with $\lambda_n$.

Next, for $0<s<1$,  we define the fractional order Sobolev space
\begin{align*}
H^s(\Omega):=\left\{u\in L^2(\Omega):\; \int_{\Omega}\int_{\Omega}\frac{|u(x)-u(y)|^2}{|x-y|^{N+2s}}\;dxdy<\infty\right\},
\end{align*}
and we endow it with the norm defined by
\begin{align*}
\|u\|_{H^s(\Omega)}=\left(\int_{\Omega}|u|^2\;dx+\int_{\Omega}\int_{\Omega}\frac{|u(x)-u(y)|^2}{|x-y|^{N+2s}}\;dxdy\right)^{\frac 12}.
\end{align*}
We also let 
\begin{align*}
H_0^s(\Omega):=\overline{\mathcal D(\Omega)}^{H^s(\Omega)},
\end{align*}
and
\begin{align*}
H_{00}^{\frac 12}(\Omega):=\left\{u\in L^2(\Omega):\;\int_{\Omega}\frac{u^2(x)}{\mbox{dist}(x,\pOm)}\;dx<\infty\right\}.
\end{align*}
Note that
\begin{align}\label{norm-sob-es}
\|u\|_{H_0^s(\Omega)}=\left(\int_{\Omega}\int_{\Omega}\frac{|u(x)-u(y)|^2}{|x-y|^{N+2s}}\;dxdy\right)^{\frac 12}
\end{align}
defines a norm on $H_0^s(\Omega)$.

Since $\Omega$ is assumed to be bounded we have the following continuous embedding:
\begin{equation}\label{inj1}
H_0^s(\Omega)\hookrightarrow
\begin{cases}
L^{\frac{2N}{N-2s}}(\Omega)\;\;&\mbox{ if }\; N>2s,\\
L^p(\Omega),\;\;p\in[1,\infty)\;\;&\mbox{ if }\; N=2s,\\
C^{0,s-\frac{N}{2}}(\bOm)\;\;&\mbox{ if }\; N<2s.
\end{cases}
\end{equation}

We notice that if $N\ge 2$, then $N\ge 2>2s$ for every $0<s<1$, or if $N=1$ and $0<s<\frac 12$, then $N=1>2s$, and thus the first embedding in \eqref{inj1} will be used.  If $N=1$ and $s=\frac 12$, then we will use the second embedding. Finally, if $N=1$ and $\frac 12<s<1$, then $N=1<2s$ and hence, the last embedding will be used. 

For any $s\geq0$, we also introduce the following fractional order Sobolev space
\begin{align*}
\mathbb H^s(\Omega):=\left\{u=\sum_{n=1}^\infty u_n\varphi_n\in L^2(\Omega):\;\;\|u\|_{\mathbb H^s(\Omega)}^2:=\sum_{n=1}^\infty \lambda_n^su_n^2<\infty\right\},
\end{align*}
where we recall that $\lambda_n$ are the eigenvalues of $-\Delta_D$ with associated normalized eigenfunctions $\varphi_n$ and
\begin{align*} 
u_n=(u,\varphi_n)_{L^2(\Omega)}=\int_{\Omega}u\varphi_n\;dx.
\end{align*}

It is well-known that
\begin{equation}\label{inf}
\mathbb H^s(\Omega)=
\begin{cases}
H^s(\Omega)=H_0^s(\Omega)\;\;\;&\mbox{ if }\; 0<s<\frac 12,\\
H_{00}^{\frac 12}(\Omega)\;\;&\mbox{ if }\; s=\frac 12,\\
H_0^s(\Omega)\;\;&\mbox{ if }\; \frac 12<s<1.
\end{cases}
\end{equation}
It follows from \eqref{inf} that the embedding \eqref{inj1} holds with $H_0^s(\Omega)$ replaced by $\mathbb H^s(\Omega)$.

\begin{definition}
The spectral fractional Laplacian is defined on the space $\mathbb H^s(\Omega)$ by
\begin{align*}
(-\Delta_D)^su=\sum_{n=1}^\infty\lambda_n^su_n\varphi_n\qquad \text{ with  } u_n=\int_{\Omega}u\varphi_n.
\end{align*}
\end{definition}

We notice that in this case we have
\begin{align}\label{norm-2}
\|u\|_{\mathbb H^s(\Omega)}=\|(-\Delta_D)^{\frac s2}u\|_{L^2(\Omega)}.
\end{align}
Let $\mathcal D(\Omega)$ be the space of test functions on $\Omega$, that is, the space of infinitely continuously differentiable functions with compact support in $\Omega$. Then $\mathcal D(\Omega)\hookrightarrow \mathbb H^s(\Omega)\hookrightarrow L^2(\Omega)$, so, the operator $(-\Delta_D)^s$ is unbounded, densely defined and with bounded inverse $(-\Delta_D)^{-s}$ in $L^2(\Omega)$. The following integral representation of the operator $(-\Delta_D)^s$ given in \cite[p.2 Formula (3)]{NALD} will be useful. For a.e. $x\in\Omega$,
\begin{align}\label{int-rep}
(-\Delta_D)^su(x)=\mbox{P.V.}\int_{\Omega}\left[u(x)-u(y)\right]J(x,y)\;dy+\kappa(x)u(x),
\end{align}
where, letting $K_\Omega(t,x,y)$ denote the heat kernel of the semigroup generated by the operator $-\Delta_D$ on $L^2(\Omega)$,
\begin{align*}
J(x,y)=\frac{s}{\Gamma(1-s)}\int_0^\infty\frac{K_\Omega(t,x,y)}{t^{1+s}}\;dt
\end{align*}
and 
\begin{align*}
\kappa(x)=\frac{s}{\Gamma(1-s)}\int_0^\infty\left(1-\int_{\Omega}K_\Omega(t,x,y)\;dy\right)\frac{dt}{t^{1+s}},
\end{align*}
where $\Gamma$ denotes the usual Gamma function. We mention that it follows from the properties of the kernel $K_\Omega$ that $J$ is symmetric and nonnegative; i.e. $J(x,y)=J(x,y)\ge 0$ for a.e. $x,y\in\Omega$. In addition we have that $\kappa(x)\ge 0$ for a.e. $x\in\Omega$.

For more details on these topics we refer the reader to \cite{NALD,NPV,Gru2,LM,NOS} and their references.

\subsection{Some results on Orlicz spaces}\label{s:orl}

Here we give some important properties of Orlicz type spaces that will be used throughout the paper.

\begin{assumption}\label{assum2}
  For a function
  $f:\Om\times\RR\to\RR$ we consider the following assumption:
  \begin{equation*}
     \begin{cases}
        f(x,\cdot) \text{ is odd, strictly increasing}&\text{ for a.e. } x\in\Omega,\\
        f(x,0)=0 &\text{ for a.e. } x\in \Omega,\\
        f(x,\cdot) \text{ is continuous }\;&\text{ for a.e. } x\in \Omega,\\
        f(\cdot,t)  \text{ is measurable }&\mbox{ for all } t\in\RR,\\
        \lim_{t\to\infty}f(x,t)=\infty &\text{ for a.e. } x\in \Omega.
      \end{cases}
  \end{equation*}
 \end{assumption}
 
Since $f(x,\cdot)$ is strictly increasing for a.e. $x\in\Omega$,
  it has an inverse which we denote by $\widetilde{f}(x,\cdot)$. Let
  $F,\widetilde{F}:\;\Omega\times\RR\to[0,\infty)$ be defined for a.e. $x\in\Omega$ by
  \begin{align*}
 F(x,t):=\int_0^{|t|}f(x,\tau)\;d\tau\;\mbox{ and  }\;
  \widetilde{F}(x,t):=\int_0^{|t|}\widetilde{f}(x,\tau)\;d\tau.
  \end{align*}
  The functions $F$ and $\widetilde F$ are complementary Musielak-Orlicz functions
  such that $F(x,\cdot)$ and $\widetilde F(x,\cdot)$ are complementary $\cN$-functions
  for a.e. $x\in\Omega$ (in the sense of \cite[p.229]{Adam}). 

\begin{assumption}\label{assum3}
  Under the setting of Assumption \ref{assum2}, and 
  for a.e. $x\in\Omega$, let both $F(x,\cdot)$ and $\widetilde F(x,\cdot)$ satisfy
  the global $(\triangle_2)$-condition,  
  that is, there exist two constants $c_1,c_2\in (0,1]$ independent of $x$, such that for a.e. $x\in\Omega$ and for all $t\ge 0$,
  \begin{align}\label{delta-2}
  c_1tf(x,t)\le F(x,t)\le tf(x,t)\;\mbox{ and }\; c_2t\widetilde f(x,t)\le \widetilde F(x,t)\le t\widetilde f(x,t).
  \end{align}
  \end{assumption}

 First we notice that since the functions $f,\widetilde f$ are odd and $F,\widetilde F$ are even functions, we have that if \eqref{delta-2} holds, then it also holds for all $t\in\RR$.
Second, Assumption \ref{assum3} is equivalent to saying that the Musielak-Orlicz functions $F$ and $\widetilde F$ satisfy the $(\triangle_2^0)$-condition in the sense that there exist two constants $C_1,C_2>0$ such that
  \begin{align*}
  F(x,2t)\le C_1 F(x,t)\;\mbox{ and }\; \widetilde F(x,2t)\le C_2\widetilde F(x,t),\;\forall\;t\in\RR\;\mbox{ and a.e. } x\in\Omega.
  \end{align*}
  This can be easily verified by following the argument given in the monograph \cite[p.232]{Adam}.
  In that case, we let
   \begin{align*}
   L_F(\Om):=\{u:\Omega\to\RR\text{ measurable}: F(\cdot,u(\cdot))\in L^1(\Omega)\}
  \end{align*}
  be the Musielak-Orlicz space.
  The space $L_{\widetilde F}(\Om)$ is defined similarly with $F$ replaced by~$\widetilde F$.

\begin{remark}
{\em
  If Assumption \ref{assum3} holds, then by
  \cite[Theorems 1 and 2]{Doman} (see also \cite[Theorem 8.19]{Adam}),
  $L_F(\Omega)$ endowed with the Luxemburg norm given by
   \begin{align*}
\norm{u}_{F,\Omega}:=\inf\left\{k>0:\;\int_{\Omega}F \left(x,\frac{u(x)}{k}\right)\;dx\le 1\right\},
 \end{align*}
  is a reflexive Banach space. The same result also holds for $L_{\widetilde F}(\Omega)$.
  Moreover, we have the following improved H\"older inequality for Musielak-Orlicz spaces (see e.g. \cite[Formula (8.11) p.234]{Adam}):
  \begin{equation}\label{hold}
    \Betrag{\int_{\Omega}uv\;dx}\le 2\norm{u}_{F,\Omega}\norm{v}_{\widetilde{F},\Omega},\;\forall\;u\in L_F(\Omega),\; v\in L_{\widetilde F}(\Omega).
  \end{equation}
 In addition, by \cite[Corollary 5.10]{Bie}, we have that
 \begin{align}\label{coer-est}
 \lim_{\|u\|_{F,\Omega}\to\infty}\frac{\int_{\Omega}F(x,u)\;dx}{\|u\|_{F,\Omega}}=\infty.
 \end{align}
 }
\end{remark}

We have the following result.

\begin{lemma}\label{lem:hoelder}
Let Assumption \ref{assum3} hold. Then  $f(\cdot,u(\cdot))\in L_{\widetilde F}(\Om)$ for all $u\in L_F(\Omega)$.
\end{lemma}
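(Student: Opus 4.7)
The key is the Fenchel--Young identity
\[
\widetilde F(x, f(x,t)) = t\, f(x,t) - F(x,t)
\]
valid for a.e.\ $x\in\Omega$ and every $t \ge 0$. It follows directly from the definitions of $F$ and $\widetilde F$ by the change of variables $\tau = f(x,s)$ in the integral defining $\widetilde F(x,f(x,t))$; since $f(x,\cdot)$ is only assumed continuous and strictly increasing (Assumption~\ref{assum2}), this substitution is understood in the Lebesgue--Stieltjes sense, or equivalently via the standard geometric ``area'' interpretation that yields $\int_0^t f(x,s)\,ds + \int_0^{f(x,t)} \widetilde f(x,\tau)\,d\tau = t\, f(x,t)$ for any monotone continuous $f$. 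Combining this identity with the left inequality in~\eqref{delta-2}, namely $c_1 t f(x,t) \le F(x,t)$, I obtain the pointwise bound
\[
\widetilde F(x, f(x,t)) \le t\, f(x,t) \le \frac{1}{c_1} F(x,t),
\]
which extends to every $t\in\R$ by the evenness of $F$ and $\widetilde F$.

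Substituting $t = u(x)$ and integrating over $\Omega$, and using the equivalence (noted right after Assumption~\ref{assum3}) that under the $(\triangle_2^0)$-condition $u\in L_F(\Omega)$ if and only if $\int_\Omega F(\cdot,u)\,dx<\infty$, I conclude
\[
\int_\Omega \widetilde F(x, f(x,u(x)))\,dx \le \frac{1}{c_1}\int_\Omega F(x,u(x))\,dx < \infty.
\]
Applying the same equivalence to $\widetilde F$, which also satisfies the $(\triangle_2^0)$-condition by Assumption~\ref{assum3}, converts this $L^1$-integrability statement into membership in the Musielak--Orlicz space, i.e.\ $f(\cdot,u(\cdot))\in L_{\widetilde F}(\Omega)$, as required. (If one prefers to verify the Luxemburg norm directly, one notes that $\widetilde F(x,\cdot)$ is nondecreasing on $[0,\infty)$ so $\widetilde F(x,f(x,u(x))/k)$ is dominated by $\widetilde F(x,f(x,u(x)))\in L^1(\Omega)$ for $k\ge 1$, and dominated convergence then gives $\int_\Omega \widetilde F(x,f(x,u(x))/k)\,dx\to 0$ as $k\to\infty$, so the infimum defining $\|f(\cdot,u(\cdot))\|_{\widetilde F,\Omega}$ is finite.)

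The only step I expect to require care is the Fenchel--Young identity itself, precisely because $f(x,\cdot)$ is not assumed differentiable; the remainder is a direct chaining of definitions with the two halves of the $(\triangle_2)$-condition~\eqref{delta-2}.
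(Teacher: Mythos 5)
Your proof is correct and follows essentially the same route as the paper's: both reduce to the pointwise chain $\widetilde F(x,f(x,\xi)) \le \xi f(x,\xi) \le \tfrac{1}{c_1} F(x,\xi)$ and then integrate. Where the paper simply asserts the first inequality (it can be read off from $\widetilde F(x,t)\le t\widetilde f(x,t)$ with $t=f(x,\xi)$ and $\widetilde f(x,f(x,\xi))=\xi$), you justify it via the Fenchel--Young equality $\widetilde F(x,f(x,t)) = tf(x,t) - F(x,t)$, which is the same fact in a slightly sharper form; the extra care you take regarding differentiability and the Luxemburg-norm reformulation is sound but not strictly needed, since $L_{\widetilde F}(\Omega)$ is defined precisely by $\widetilde F(\cdot,v(\cdot))\in L^1(\Omega)$.
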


\begin{proof}
Assume that Assumption \ref{assum3} holds. It follows from the assumptions that there exists a constant $C>0$ such that for all $\xi\in\RR$ and a.e. $x\in\Omega$,
 \begin{align*}
 \widetilde F(x,f(x,\xi)) \le \xi f(x,\xi) \le C F(x,\xi).
 \end{align*}
   Hence,
  \begin{align*}
 \int_{\Om} \widetilde F(x,f(x,u(x)))\;dx \le C\int_{\Om} F(x,u(x))\;dx <\infty
  \end{align*}
 and the proof is finished.
\end{proof}

\begin{definition}
Let $0<s<1$.  Under Assumption \ref{assum3} we can define the Banach space
  $\cV$ by
\begin{align*}
\cV:=\cV(\Omega,F):=\klg{u\in \mathbb H^{s}(\Om): F(\cdot,u(\cdot))\in L^1(\Om)}
\end{align*}
  and we endow it with the norm defined by
 \begin{align*}
\norm{u}_{\cV}:=\norm{u}_{\mathbb H^{s}(\Om)}+\norm{u}_{F,\Om}.
 \end{align*}
\end{definition}

  In this case $\cV$ is a reflexive Banach space which is continuously embedded into $\mathbb H^{s}(\Om)$. In addition, it follows from \eqref{inj1} that we have the following continuous embedding 
  \begin{align}\label{sobo1}
  \cV\hookrightarrow \mathbb H^{s}(\Om)\hookrightarrow L^{2^\star}(\Omega),
  \end{align}
where we recall that
\begin{align*}
2^\star=\frac{2N}{N-2s}\;\mbox{ if }\; N\ge 2>2s\;\mbox{ or if }\,N=1\;\mbox{ and }\; 0<s<\frac 12.
\end{align*}
 If $N=1$ and $s=\frac 12$, then $2^{\star}$ is any number in the interval $[1,\infty)$. If $N=1$ and $\frac 12<s<1$, then we have the continuous embedding
\begin{align}\label{sobo2}
\cV\hookrightarrow \mathbb H^{s}(\Om)\hookrightarrow C^{0,s-\frac 12}(\bOm).
\end{align}

\subsection{Weak solutions of the semilinear problem}\label{sec:weaksol}

Now we can introduce our notion of weak solutions to the system \eqref{ellip-pro}.

We recall that we have set $\cV:=\mathbb H^{s}(\Omega)\cap L_F(\Omega)$. We shall denote by $\cV^\star=(\mathbb H^{s}(\Omega)\cap L_F(\Omega))^\star$ the dual of the reflexive Banach space $\cV$ and by $\langle\cdot,\cdot\rangle$ their duality map.

\begin{definition}
A function $u\in \cV$ is said to be a weak solution of  \eqref{ellip-pro} if the identity
\begin{align}\label{form-ws}
\mathcal F(u,v):=\int_{\Omega}(-\Delta_D)^{\frac s2}u(-\Delta_D)^{\frac s2}v\;dx+\int_{\Omega}f(x,u)v\;dx=\langle g,v\rangle,
\end{align}
holds for every $v\in \cV$ and the right hand side $g\in \cV^\star$.
\end{definition}

We have the following result of existence and uniqueness of weak solution.

\begin{proposition}[Existence of weak solution]\label{prop-exis}
Let Assumption \ref{assum3} hold.  Then for every $g\in \cV^\star$, the system \eqref{ellip-pro} has a unique weak solution $u$. In addition, if $g\in \mathbb H^{-s}(\Omega):=(\mathbb H^s(\Omega))^\star$, then there exists a constant $C>0$ such that 
\begin{equation}\label{nor-est}
\|u\|_{\mathbb H^s(\Omega)}\le C\|g\|_{\mathbb H^{-s}(\Omega)}.
\end{equation}
\end{proposition}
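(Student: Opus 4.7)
The plan is to apply the Browder–Minty surjectivity theorem to the operator $T\colon\cV\to\cV^\star$ defined by $\langle T(u),v\rangle:=\mathcal F(u,v)$ with $\mathcal F$ as in \eqref{form-ws}, and then establish \eqref{nor-est} by a direct energy test. Since $\cV$ is reflexive, four things must be verified: $T$ maps into $\cV^\star$, $T$ is monotone (in fact strictly monotone, which gives uniqueness for free), $T$ is coercive, and $T$ is hemicontinuous.

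Well-definedness and boundedness of $T(u)$ on $\cV$ follow from \eqref{norm-2} for the Dirichlet piece and, for the nonlinear piece, from Lemma \ref{lem:hoelder} combined with the Musielak–Orlicz Hölder inequality \eqref{hold}, giving $|\int_\Omega f(x,u)v\,dx|\le 2\|f(\cdot,u)\|_{\widetilde F,\Omega}\|v\|_{F,\Omega}$. Monotonicity is immediate from the symmetric positive piece $\|u-v\|_{\mathbb H^s(\Omega)}^2$ together with the strict monotonicity of $f(x,\cdot)$, which gives $\int_\Omega[f(x,u)-f(x,v)](u-v)\,dx\ge 0$; their sum vanishes only at $u=v$, yielding strict monotonicity and hence uniqueness once surjectivity is proved.

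For coercivity, the right inequality in \eqref{delta-2}, extended from $t\ge 0$ to all $t\in\RR$ via the oddness of $f$ and evenness of $F$, yields
\begin{equation*}
\langle T(u),u\rangle\ge \|u\|_{\mathbb H^s(\Omega)}^2+\int_\Omega F(x,u)\,dx.
\end{equation*}
When $\|u\|_\cV\to\infty$, I split into the cases where $\|u\|_{F,\Omega}$ stays bounded (so $\|u\|_{\mathbb H^s(\Omega)}\to\infty$ and the quadratic Sobolev piece dominates $\|u\|_\cV$) versus $\|u\|_{F,\Omega}\to\infty$ (where \eqref{coer-est} forces $\int_\Omega F(x,u)\,dx/\|u\|_{F,\Omega}\to\infty$ and hence dominates $\|u\|_\cV$). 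Hemicontinuity of $t\mapsto\langle T(u+tv),w\rangle$ reduces to dominated convergence on the nonlinear term, with dominating function $f(x,|u|+(|t_0|+1)|v|)\,|w|$, whose integrability follows again from Lemma \ref{lem:hoelder} and \eqref{hold}. Browder–Minty then yields a unique $u\in\cV$ with $T(u)=g$ in $\cV^\star$ for every $g\in\cV^\star$.

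For \eqref{nor-est}, when $g\in\mathbb H^{-s}(\Omega)$ the continuous embedding $\cV\hookrightarrow\mathbb H^s(\Omega)$ gives $g\in\cV^\star$, and testing \eqref{form-ws} with $v=u$ and using $f(x,u)u\ge 0$ a.e.\ (which follows from Assumption \ref{assum2}, since $f$ is odd, strictly increasing, with $f(x,0)=0$) gives $\|u\|_{\mathbb H^s(\Omega)}^2\le \langle g,u\rangle\le \|g\|_{\mathbb H^{-s}(\Omega)}\|u\|_{\mathbb H^s(\Omega)}$, whence the claim. The main obstacle I anticipate is the coercivity estimate, because $\cV$ carries the sum of two qualitatively different norms while \eqref{coer-est} only controls the Luxemburg part; the case split above is the cleanest way to handle this, and all other verifications are routine once Lemma \ref{lem:hoelder} is in hand.
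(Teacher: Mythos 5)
Your proposal follows the paper's proof step for step: the Browder--Minty framework, well-definedness and boundedness of the operator via Lemma \ref{lem:hoelder} and the Orlicz--H\"older inequality \eqref{hold}, strict monotonicity from the strict monotonicity of $f(x,\cdot)$, hemicontinuity, coercivity from the $(\triangle_2)$-condition together with \eqref{coer-est}, and the $v=u$ energy test for \eqref{nor-est}. The paper is even terser than you are (it omits the case split entirely), so your write-up is if anything more explicit.

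One step as you have literally phrased it does not quite work. In your Case~2, where $\|u\|_{F,\Omega}\to\infty$, you claim that \eqref{coer-est} forces $\int_\Omega F(x,u)\,dx$ to dominate $\|u\|_{\cV}=\|u\|_{\mathbb H^s(\Omega)}+\|u\|_{F,\Omega}$. But \eqref{coer-est} only controls the ratio $\int_\Omega F(x,u)\,dx / \|u\|_{F,\Omega}$; if $\|u\|_{F,\Omega}\to\infty$ slowly while $\|u\|_{\mathbb H^s(\Omega)}$ blows up much faster, the ratio $\int_\Omega F(x,u)\,dx / \|u\|_\cV$ can stay bounded. The right dichotomy is not ``is $\|u\|_{F,\Omega}$ bounded'' but rather ``which of the two terms in $\|u\|_\cV$ is at least half the total.'' When $\|u\|_{\mathbb H^s(\Omega)}\ge \tfrac12\|u\|_\cV$ the quadratic term gives $\langle T(u),u\rangle/\|u\|_\cV\ge \|u\|_{\mathbb H^s(\Omega)}^2/\|u\|_\cV\ge \tfrac14\|u\|_\cV$, and when $\|u\|_{F,\Omega}\ge\tfrac12\|u\|_\cV$ one applies \eqref{coer-est} since then $\|u\|_{F,\Omega}\to\infty$ and $\int_\Omega F(x,u)\,dx/\|u\|_\cV\ge\tfrac12\int_\Omega F(x,u)\,dx/\|u\|_{F,\Omega}\to\infty$. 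With that repair the coercivity is airtight; the remainder of your argument matches the paper's and is correct.
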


\begin{proof}
Let $u\in \cV$ be fixed. First we notice that it follows from Lemma \ref{lem:hoelder} that $f(\cdot,u(\cdot))\in L_{\widetilde F}(\Omega)$. Next, using the classical H\"older inequality and \eqref{hold} we have that for all $v\in \cV$,
\begin{align}\label{ine-bound}
|\mathcal F(u,v)|\le& \|(-\Delta_D)^{\frac s2}u\|_{L^2(\Omega)}\|(-\Delta_D)^{\frac s2}v\|_{L^2(\Omega)}+2\|f(\cdot,u)\|_{\widetilde F,\Omega}\|v\|_{F,\Omega}\notag\\
\le &\left(\|(-\Delta_D)^{\frac s2}u\|_{L^2(\Omega)}+2\|f(\cdot,u)\|_{\widetilde F,\Omega}\right)\|v\|_{\cV}.
\end{align}
Since $\mathcal F(u,\cdot)$ is linear (in the second variable) we have shown that $\mathcal F(u,\cdot)\in \cV^\star$ for every $u\in \cV$. Since $f(x,\cdot)$ is strictly monotone, we have that every $u,v\in \cV$, $u\ne v$,
\begin{align*}
\mathcal F(u,u-v)-\mathcal F(v,u-v)>0.
\end{align*}
Hence, $\mathcal F$ is strictly monotone.  It follows from the continuity of the norm function and the continuity of $f(x,\cdot)$ that $\mathcal F$ is hemi-continuous. It follows also from the $(\Delta_2)$-condition and \eqref{coer-est} that
\begin{align*}
\lim_{\|u\|_{ F,\Omega}\to\infty}\frac{\int_{\Omega}f(x,u)u\;dx}{\|u\|_{F,\Omega}}=\infty,
\end{align*}
and this implies that
\begin{align*}
\lim_{\|u\|_{\cV}\to\infty}\frac{\mathcal F(u,u)}{\|u\|_{\cV}}=\infty.
\end{align*}
Hence, $\mathcal F$ is coercive. We have shown that for every $u\in \cV$ there exists a unique $A_F\in \cV^\star$ such that $\mathcal F(u,v)=\langle A_F(u),v\rangle$ for every $v\in \cV$. This defines an operator $A_F:\; \cV\to \cV^\star$ which is hemi-continuous, strictly monotone, coercive and bounded (the boundedness follows from \eqref{ine-bound}). Therefore $A_F(\cV)=\cV^\star$ and hence, by the Browder-Minty theorem, for every $g\in \cV^\star$, there exists a unique $u\in \cV$ such that $A_F(u)=v$. Now assume that $g\in \mathbb H^{-s}(\Omega)\hookrightarrow \cV^\star$. Then taking $v=u$ in \eqref{form-ws}, using the fact that $f(x,u)u\ge 0$ and noticing that $\langle g,u\rangle_{\cV^\star,\cV}=\langle g,u\rangle_{ \mathbb H^{-s}(\Omega), \mathbb H^{s}(\Omega)}$ (recall that $g\in\mathbb H^{-s}(\Omega)$ and $u\in\mathbb H^s(\Omega)$) we get that
\begin{align*}
\|u\|_{\mathbb H^s(\Omega)}^2\le |\langle g,u\rangle|\le \|g\|_{\mathbb H^{-s}(\Omega)}\|u\|_{\mathbb H^s(\Omega)}.
\end{align*}
We have shown \eqref{nor-est} and the proof is finished.
\end{proof}

The following theorem is the main result of this section.

\begin{theorem}\label{theo-bound}
Let Assumption \ref{assum3} hold and that $g\in L^p(\Omega)$ with 
\begin{equation}\label{cond-p}
\begin{cases}
p>\frac{N}{2s}\;\;&\mbox{ if }\; N>2s,\\
p>1 \;\;&\mbox{ if }\; N=2s,\\
p=1\;\;&\mbox{ if }\; N<2s.
\end{cases}
\end{equation}
Then every weak solution $u$ of  \eqref{ellip-pro} belongs to $L^\infty(\Omega)$. Moreover there is a constant $C=C(N,s,p,\Omega)>0$ such that
\begin{align}\label{inf-norm}
\|u\|_{L^\infty(\Omega)}\le C\|g\|_{L^p(\Omega)}.
\end{align}
\end{theorem}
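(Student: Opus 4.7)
The plan is to apply the classical Stampacchia truncation and iteration scheme, exploiting two structural facts: first, that the nonlinear term $f(x,u)$ has the same sign as $u$ (since $f(x,\cdot)$ is odd, strictly increasing and vanishes at $0$), so it contributes with a favorable sign when one tests with a one-sided truncation; second, that the spectral form $u \mapsto \|(-\Delta_D)^{s/2}u\|_{L^2(\Omega)}^2$ is Markovian, so truncations lie in $\mathbb{H}^s(\Omega)$ and decrease the energy. Concretely, for $k \geq 0$ set $u_k := (u-k)^+$. The inclusion $u_k \in \mathcal{V}$ follows from the normal-contraction property together with $0 \leq u_k \leq u^+ \in L_F(\Omega)$ and monotonicity of $F(x,\cdot)$. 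Testing \eqref{form-ws} with $v = u_k$, dropping the nonnegative term $\int f(x,u)\,u_k\,dx \geq 0$, and using the integral representation \eqref{int-rep} together with the pointwise inequalities
\begin{align*}
[u(x)-u(y)][u_k(x)-u_k(y)] \geq [u_k(x)-u_k(y)]^2, \qquad u(x)\,u_k(x) \geq u_k(x)^2,
\end{align*}
yields, with $A_k := \{x \in \Omega : u(x) > k\}$,
\begin{align*}
\|(-\Delta_D)^{s/2} u_k\|_{L^2(\Omega)}^2 \leq \int_{A_k} g\, u_k\,dx.
\end{align*}

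Focusing on the case $N > 2s$, I would combine the Sobolev embedding \eqref{sobo1} with H\"older's inequality on $A_k$ to obtain
\begin{align*}
\|u_k\|_{L^{2^\star}(\Omega)}^2 \leq C\,\|g\|_{L^p(\Omega)}\,\|u_k\|_{L^{2^\star}(\Omega)}\,|A_k|^{1-\frac{1}{p}-\frac{1}{2^\star}}.
\end{align*}
Canceling one factor and using, for $h > k$, the bound $u_k \geq h-k$ on $A_h \subset A_k$ produces the recursion
\begin{align*}
|A_h| \leq \frac{C^{2^\star}\,\|g\|_{L^p(\Omega)}^{2^\star}}{(h-k)^{2^\star}}\,|A_k|^{\gamma}, \qquad \gamma := 2^\star\Bigl(1-\tfrac{1}{p}-\tfrac{1}{2^\star}\Bigr).
\end{align*}
A direct computation shows $\gamma > 1$ if and only if $p > N/(2s)$, which is precisely the algebraic source of the threshold in \eqref{cond-p}. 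The standard Stampacchia iteration lemma then yields $|A_{k_0}| = 0$ for some $k_0 \leq C\,\|g\|_{L^p(\Omega)}$, so $u \leq k_0$ almost everywhere. Repeating the argument with $-u$ (which satisfies the same weak formulation with $g$ replaced by $-g$, by oddness of $f$) bounds the negative part and gives \eqref{inf-norm}. The case $N = 2s$ is handled identically using the second embedding in \eqref{inj1} with an exponent large enough so that $1 - 1/p - 1/2^\star > 0$ and the analogue of $\gamma > 1$ holds, while the case $N < 2s$ is already immediate from the embedding $\mathcal{V} \hookrightarrow C^{0,s-N/2}(\overline{\Omega})$ and requires no iteration.

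The main technical obstacle is the admissibility of $u_k$ as a test function, i.e., verifying $u_k \in \mathbb{H}^s(\Omega)$ and the energy-decreasing inequality $\int (-\Delta_D)^s u \cdot u_k \,dx \geq \|(-\Delta_D)^{s/2} u_k\|_{L^2(\Omega)}^2$. For $s \neq \tfrac{1}{2}$ the identification $\mathbb{H}^s(\Omega) = H_0^s(\Omega)$ from \eqref{inf} makes both statements standard consequences of the Beurling--Deny theory applied to the kernel representation \eqref{int-rep}; at the borderline $s = \tfrac{1}{2}$ the characterization $\mathbb{H}^{1/2}(\Omega) = H_{00}^{1/2}(\Omega)$ requires the additional pointwise bound $u_k^2/\mathrm{dist}(\cdot,\partial\Omega) \leq u^2/\mathrm{dist}(\cdot,\partial\Omega)$ to control the weighted part of the seminorm. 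Once these compatibility points are in place, the remainder is the classical Stampacchia iteration, and the only reason why the threshold on $p$ in \eqref{cond-p} cannot be relaxed is the condition $\gamma > 1$ identified above.
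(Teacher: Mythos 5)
Your proposal is correct and follows essentially the same route as the paper's proof: Stampacchia truncation, the energy inequality for the truncation derived from the integral representation \eqref{int-rep} and the nonnegativity of $J$ and $\kappa$, dropping the favourable-sign nonlinear term, the Sobolev embedding and H\"older's inequality, and finally the iteration Lemma~\ref{lem-01}, with the exponent condition $\gamma>1\Leftrightarrow p>N/(2s)$ appearing in identical form. The only cosmetic difference is your use of the one-sided truncation $(u-k)^+$ together with the symmetry argument applied to $-u$, whereas the paper works directly with the two-sided truncation $(|u|-k)^+\sgn(u)$ and iterates on $\|\chi_{A_k}\|_{L^{2^\star}(\Omega)}$ rather than $|A_k|$; these are equivalent implementations of the same idea.
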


\begin{remark}
We mention that if $N=1$ and $\frac 12<s<1$, then it follows from \eqref{sobo2} that the weak solution of \eqref{ellip-pro} is globally H\"older continuous on $\bOm$ and in this case there is nothing to prove. Thus we need to prove the theorem only in the cases $N\ge 2$, or $N=1$ and $0<s\le \frac 12$.
\end{remark}

To prove the theorem we need the following lemma which is of analytic nature and will be useful in deriving some a priori estimates of weak solutions of elliptic type equations (see e.g. \cite[Lemma B.1.]{kinderlehrer1980}). 

\begin{lemma}\label{lem-01}
Let $\Xi = \Xi(t)$ be a nonnegative, non-increasing function on a half line $t\ge k_0\ge 0$ such that there are positive constants $c, \alpha$ and $\delta$ ($\delta >1$) with
\begin{equation*}
\Xi(h) \le c(h-k)^{-\alpha}\Xi(k)^{\delta}\mbox{ for }  h>k\ge k_0.
\end{equation*}
Then
\begin{equation*}
\Xi(k_0+d) = 0\quad \mbox{ with }\quad d^{\alpha}= c \Xi(k_0)^{\delta -1}2^{\alpha\delta/(\delta -1)}.
\end{equation*}
\end{lemma}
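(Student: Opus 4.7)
The plan is to use a dyadic iteration argument due to Stampacchia. I would define an increasing sequence $k_n$ converging to $k_0+d$, apply the hypothesis along consecutive pairs of this sequence, and prove by induction that $\Xi(k_n)$ decays geometrically to zero. The decay rate has to be chosen to match the self-similar structure forced by the exponent $\delta$.

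Concretely, I would set
\[
k_n := k_0 + d - \frac{d}{2^n}, \qquad n = 0,1,2,\ldots,
\]
so that $k_0 = k_0$, $k_n \nearrow k_0 + d$, and the increments are $k_{n+1} - k_n = d\, 2^{-(n+1)}$. Applying the standing hypothesis with $h = k_{n+1}$ and $k = k_n$ (both satisfying $h > k \ge k_0$) gives
\[
\Xi(k_{n+1}) \le c \,(d\,2^{-(n+1)})^{-\alpha}\, \Xi(k_n)^{\delta}
= c\, 2^{\alpha(n+1)} d^{-\alpha}\, \Xi(k_n)^{\delta}.
\]

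Next I would show by induction that, with the geometric ratio $r := 2^{-\alpha/(\delta-1)} \in (0,1)$,
\[
\Xi(k_n) \le \Xi(k_0)\, r^n \qquad \text{for all } n\ge 0.
\]
The base case $n=0$ is trivial. For the induction step, assuming $\Xi(k_n) \le \Xi(k_0) r^n$, the recursive inequality yields
\[
\Xi(k_{n+1}) \le c\, 2^{\alpha(n+1)} d^{-\alpha}\, \Xi(k_0)^{\delta}\, r^{n\delta},
\]
and it suffices to verify $c\, 2^{\alpha(n+1)} d^{-\alpha}\, \Xi(k_0)^{\delta-1}\, r^{n\delta} \le r^{n+1}$, i.e.\ $c\, 2^{\alpha} d^{-\alpha}\, \Xi(k_0)^{\delta-1} \le r^{n+1-n\delta}\cdot 2^{-\alpha n} = r \cdot (r^{1-\delta} 2^{-\alpha})^{n}$. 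With the choice $r = 2^{-\alpha/(\delta-1)}$ one has $r^{1-\delta} = 2^{\alpha}$, so the factor in parentheses equals $1$ and the requirement collapses to
\[
d^{\alpha} \ge c \cdot 2^{\alpha} \cdot 2^{\alpha/(\delta-1)} \,\Xi(k_0)^{\delta-1}
= c\, \Xi(k_0)^{\delta-1} 2^{\alpha\delta/(\delta-1)},
\]
which is precisely the hypothesis on $d$.

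Finally I would let $n \to \infty$: since $r < 1$, the induction gives $\Xi(k_n) \to 0$, and because $\Xi$ is non-increasing and $k_n \le k_0 + d$ for every $n$, monotonicity yields $0 \le \Xi(k_0+d) \le \Xi(k_n) \to 0$, hence $\Xi(k_0+d) = 0$. The only mildly delicate step is bookkeeping the exponents so that the choice $r = 2^{-\alpha/(\delta-1)}$ exactly absorbs the factor $2^{\alpha n}$ coming from the dyadic increments; once that algebraic identity is in place the induction is routine, so I expect no real obstacle beyond careful index tracking.
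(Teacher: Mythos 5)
Your proof is correct: the dyadic sequence, the geometric-decay induction with ratio $r=2^{-\alpha/(\delta-1)}$, and the exponent bookkeeping all check out, and the requirement collapses exactly to the stated value of $d^{\alpha}$. The paper does not reproduce a proof but cites Kinderlehrer--Stampacchia, whose argument is precisely this iteration, so your approach is essentially the same as the one the paper relies on.
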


\begin{proof}[Proof of Theorem \ref{theo-bound}]
Invoking Assumption \ref{assum3} and $g\in L^p(\Omega)$ with $p$ satisfying \eqref{cond-p}, it follows from \eqref{sobo1} that $g\in \cV^\star$. Hence, by Proposition \ref{prop-exis}, the system \eqref{ellip-pro} has a unique weak solution $u\in \cV$. We prove the result in two steps.

{\em Step 1}. Let $u\in \cV$, $k\ge 0$ and set $u_k:=(|u|-k)^+\sgn(u)$. Using \cite[Lemma 2.7]{War} we get that $u_k\in \cV$.
We claim that
\begin{align}\label{claim1}
\mathcal F(u_k,u_k)\le\mathcal F(u,u_k).
\end{align}
Indeed, let $A_k:=\{x\in\Om:\;|u(x)|\ge k\}$, $A_k^+:=\{x\in\Om:\;u(x)\ge k\}$ and $A_k^-:=\{x\in\Om:\;u(x)\le -k\}$ so that $A_k=A_k^+\cup A_k^-$. Then
\begin{equation}\label{for-uk}
u_k=
\begin{cases}
u-k\;\;&\mbox{ in }\;A_k^+,\\
u+k &\mbox{ in }\;A_k^-,\\
0 &\mbox{ in }\; \Om\setminus A_k.
\end{cases}
\end{equation}
Since $f(x,\cdot)$ is odd, monotone increasing and $0\le u_k=u-k\le u$ on $A_k^+$, we have that for a.e. $x\in A_k^+$,
\begin{align}\label{A1}
f(x,u_k)u_k=f(x,u-k)u_k\le  f(x,u)u_k.
\end{align}
Similarly, since $u\le u+k=u_k\le 0$ on $A_k^-$, it follows that for a.e. $x\in A_k^-$,
\begin{align}\label{A2}
f(x,u_k)u_k=f(x,u+k)u_k\le  f(x,u)u_k.
\end{align}
It follows from \eqref{A1} and \eqref{A2} that for every $k\ge 0$,
\begin{align}\label{A1-A2}
\int_{\Omega}f(x,u_k)u_k\;dx\le \int_{\Omega}f(x,u)u_k\;dx.
\end{align}
Next, we show that for every $k\ge 0$,
\begin{align}\label{pf-claim1-2}
\int_{\Omega}(-\Delta_D)^{\frac s2}u_k(-\Delta_D)^{\frac s2}u_k\;dx\le \int_{\Omega}(-\Delta_D)^{\frac s2}u(-\Delta_D)^{\frac s2}u_k\;dx.
\end{align}
We notice that it follows from the integral representation \eqref{int-rep} that
\begin{align*}
&\int_{\Omega}(-\Delta_D)^{\frac s2}u_k(-\Delta_D)^{\frac s2}u_k\;dx=\|u_k\|_{\mathbb H^s(\Omega)}^2\\
=&\frac 12\int_{\Omega}\int_{\Omega}|u_k(x)-u_k(y)|^2J(x,y)\;dxdy+\int_{\Omega}\kappa(x)|u_k(x)|^2\;dx.
\end{align*}
Calculating and using \eqref{for-uk} we get that for every $k\ge 0$,
\begin{align}\label{pf-claim1}
\int_{\Omega}\int_{\Omega}&|u_k(x)-u_k(y)|^2J(x,y)\;dxdy\\
=&\int_{A_k^+}\int_{A_k^+}(u(x)-u(y))(u_k(x)-u_k(y))J(x,y)\;dxdy\notag\\
&+\int_{A_k^+}\int_{A_k^-}|u(x)-u(y)-2k|^2J(x,y)\;dxdy\notag\\
&+\int_{A_k^-}\int_{A_k^-}(u(x)-u(y))(u_k(x)-u_k(y))J(x,y)\;dxdy\notag\\
&+\int_{A_k^-}\int_{A_k^+}|u(x)-u(y)+2k|^2J(x,y)\;dxdy\notag\\
&+\int_{\Omega\setminus A_k}\int_{A_k}|u_k(y)|^2J(x,y)\;dxdy\notag\\
&+\int_{A_k}\int_{\Omega\setminus A_k}|u_k(x)|^2J(x,y)\;dxdy\notag.
\end{align}
Since $u(x)-u(y)-2k\ge 0$ for a.e. $(x,y)\in A_k^+\times A_k^-$, we have that for a.e. $(x,y)\in A_k^+\times A_k^-$,
\begin{align}\label{EW1}
(u(x)-u(y)-2k)^2&\le (u(x)-u(y)) (u(x)-u(y)-2k)\\
&=(u(x)-u(y))(u_k(x)-u_k(y)).\notag
\end{align}
 Since $u(x)-u(y)+2k\le 0$ for a.e $(x,y)\in A_k^-\times A_k^+$, it follows that for a.e $(x,y)\in A_k^-\times A_k^+$,
\begin{align}\label{EW2} 
 (u(x)-u(y)+2k)^2&\le (u(x)-u(y)) (u(x)-u(y)+2k)\\
 &=(u(x)-u(y))(u_k(x)-u_k(y)).\notag 
\end{align} 
 For a.e. $(x,y)\in (\Omega \setminus A_k)\times A_k$, we have that (recall that $u_k(x)=0$),
\begin{align}\label{M}
(u(x)-u(y))(u_k(x)-u_k(y))=-(u(x)-u(y))u_k(y)=(u(y)-u(x))u_k(y).
\end{align}
Using \eqref{M} we get the following estimates:
\begin{itemize}
\item For a.e. $(x,y)\in (\Omega\setminus A_k)\times A_k^+$ we have that (as $k-u(x)> 0$ and $u(y)-k\ge 0$)
\begin{align}\label{M1}
(u(x)-u(y))(u_k(x)-u_k(y))=&(u(y)-k+k-u(x))(u(y)-k)\notag\\
=&(u(y)-k)^2+(k-u(x))(u(y)-k)\notag\\
\ge & (u(y)-k)^2=|u_k(y)|^2.
\end{align}
\item For a.e. $(x,y)\in (\Omega\setminus A_k)\times A_k^-$ we have that (as $k+u(x) > 0$ and $u(y)+k\le 0$)
\begin{align}\label{M2}
(u(x)-u(y))(u_k(x)-u_k(y))=&(u(y)+k-k-u(x))(u(y)+k)\notag\\
=&(u(y)+k)^2-(k+u(x))(u(y)+k)\notag\\
\ge& (u(y)+k)^2=|u_k(y)|^2.
\end{align}
\end{itemize}
Combining \eqref{M1} and \eqref{M2} yields
for a.e. $(x,y)\in (\Omega\setminus A_k)\times A_k$ 
\begin{align}\label{M12}
(u(x)-u(y))(u_k(x)-u_k(y))\ge |u_k(y)|^2.
\end{align}
Proceeding in the same manner, we also get that for a.e. $(x,y)\in A_k\times (\Omega\setminus A_k)$ (recall that here $u_k(y)=0$), 
\begin{align}\label{M3}
(u(x)-u(y))(u_k(x)-u_k(y))\ge |u_k(x)|^2.
\end{align}
Using \eqref{EW1}, \eqref{EW2}, \eqref{M12}, and \eqref{M3} we get from \eqref{pf-claim1} that for every $k\ge 0$ (recall that $J(x,y)\geq 0$ for a.e. $x,y\in \Omega$),
 \begin{align}\label{pf-claim1-0}
\int_{\Omega}\int_{\Omega}&|u_k(x)-u_k(y)|^2J(x,y)\;dxdy\\
\le &\int_{\Omega}\int_{\Omega}(u(x)-u(y))(u_k(x)-u_k(y))J(x,y)\;dxdy.\notag
\end{align}
As for \eqref{A1-A2} we have that for every $k\ge 0$ (recall that $\kappa(x)\ge 0$ for a.e. $x\in\Omega$),
\begin{align}\label{kappa}
\int_{\Omega}\kappa(x)|u_k(x)|^2\;dx\le \int_{\Omega}\kappa(x)u(x)u_k(x)\;dx.
\end{align}
Now the estimate \eqref{pf-claim1-2} follows from \eqref{pf-claim1-0} and \eqref{kappa} since according to \eqref{int-rep} there holds
\begin{align*}
\int_{\Omega}(-\Delta_D)^{\frac s2}u(-\Delta_D)^{\frac s2}u_k\;dx&=\frac 12\int_{\Omega}\int_{\Omega}(u(x)-u(y))(u_k(x)-u_k(y))J(x,y)\;dxdy\\
&\quad+\int_{\Omega}\kappa(x)u(x)u_k(x)\;dx.
\end{align*}
It follows from \eqref{A1-A2} and \eqref{pf-claim1-2} that for every $k\ge 0$,
\begin{align*}
\mathcal F(u_k,u_k)=&\int_{\Omega}(-\Delta_D)^{\frac s2}u_k(-\Delta_D)^{\frac s2}u_k\;dx+\int_{\Omega}f(x,u_k)u_k\;dx\\
\le &\int_{\Omega}(-\Delta_D)^{\frac s2}u(-\Delta_D)^{\frac s2}u_k\;dx+\int_{\Omega}f(x,u)u_k\;dx\\
\le &\mathcal F(u,u_k),
\end{align*}
and we have proved the claim \eqref{claim1}.

{\em Step 2}. Let $u\in\cV$ be the unique weak solution of the system \eqref{ellip-pro}, $k\ge 0$ and let $u_k$ be as above. Let $p_1\in [1,\infty]$ be such that $\frac{1}{p}+\frac{1}{2^\star}+\frac{1}{p_1}=1$ where we recall that $2^\star=\frac{2N}{N-2s}>2$. Since $p>\frac{N}{2s}=\frac{2^\star}{2^\star-2}$, we have that
\begin{align}\label{eq-B}
\frac{1}{p_1}=1-\frac{1}{2^\star}-\frac{1}{p}>\frac{2^\star}{2^\star}-\frac{1}{2^\star}-\frac{2^\star-2}{2^\star}=\frac{1}{2^\star}\Longrightarrow p_1<2^\star.
\end{align}
Taking $v=u_k$ as a test function in \eqref{form-ws} and using the classical H\"older inequality we get that there exists a constant $C=C(N,s,p)>0$ such that
\begin{align}\label{est}
\mathcal F(u,u_k)=\int_{\Omega}gu_k\;dx\le& \|g\|_{L^p(\Omega)}\|u_k\|_{L^{2^\star}(\Omega)}\|\chi_{A_k}\|_{L^{p_1}(\Omega)},
\end{align}
where $\chi_{A_k}$ denotes the characteristic function of the set $A_k$.
Using \eqref{claim1}, \eqref{est}, \eqref{sobo1} and the fact that $\int_{\Om}f(x,u_k)u_k\;dx\ge 0$, we get that there exist two constants $C,C_1>0$ such that for every $k\ge 0$,
\begin{align*}
C\|u_k\|_{L^{2^\star}(\Omega)}^2 &\le \|u_k\|_{\mathbb H^{s}(\Omega)}^2\le \mathcal F(u_k,u_k)\le \mathcal F(u,u_k) \\
&\le C_1 \|g\|_{L^p(\Omega)}\|u_k\|_{L^{2^\star}(\Omega)}\|\chi_{A_k}\|_{L^{p_1}(\Omega)},
\end{align*}
and this implies that there exists a constant $C>0$ such that for every $k\ge 0$,
\begin{align}\label{est2}
\|u_k\|_{L^{2^\star}(\Omega)}\le C \|g\|_{L^p(\Omega)}\|\chi_{A_k}\|_{L^{p_1}(\Omega)}.
\end{align}
Let $h>k$. Then $A_h\subset A_k$ and on $A_h$ we have that $|u_k|\ge h-k$. Therefore, it follows from \eqref{est2} that for every $h>k\ge 0$,
\begin{align}\label{B1}
\|\chi_{A_h}\|_{L^{2^\star}(\Omega)}\le C (h-k)^{-1}\|g\|_{L^p(\Omega)}\|\chi_{A_k}\|_{L^{p_1}(\Omega)}.
\end{align}
Let $\delta:=\frac{2^\star}{p_1}>1$ by \eqref{eq-B}. Then using the H\"older inequality again we get that there exists a constant $C>0$ such that for every $k\ge 0$, we have
\begin{align}\label{B2}
\|\chi_{A_k}\|_{L^{p_1}(\Omega)}\le C \|\chi_{A_k}\|_{L^{2^\star}(\Omega)}^\delta.
\end{align}
It follows from \eqref{B1} and \eqref{B2} that there exists a constant $C>0$ such that for every $h>k\ge 0$,
\begin{align*}
\|\chi_{A_h}\|_{L^{2^\star}(\Omega)}\le C (h-k)^{-1}\|g\|_{L^p(\Omega)}\|\chi_{A_k}\|_{L^{2^\star}(\Omega)}^\delta.
\end{align*}
It follows from Lemma \ref{lem-01} with $\Xi(k)=\|\chi_{A_k}\|_{L^{2^\star}(\Omega)}$ that there exists a constant $C_1>0$ such that
\begin{align*}
\|\chi_{A_K}\|_{L^{2^\star}(\Omega)}=0\;\mbox{ with }\;K=CC_1\|g\|_{L^p(\Omega)}.
\end{align*}
We have shown the estimate \eqref{inf-norm} and the proof is finished.
\end{proof}

We have the following improved regularity of weak solutions to  the system \eqref{ellip-pro}, in case $\Omega$ is a smooth open set.

\begin{corollary}[Regularity: $\Omega$ smooth]\label{cor-212} 
Let $\Omega\subset\RR^N$ be a bounded open set with smooth boundary.
Let Assumption \ref{assum3} hold and that 
\begin{align}\label{con-f}
f(\cdot,t)\in L^\infty(\Omega),\;\; \forall\;t\in\RR,\; |t|\le \alpha\;\mbox{ for some constant }\alpha>0. 
\end{align}
Then the following assertions hold.
\begin{enumerate}
\item Let $g\in L^p(\Omega)$ with $p$ as in \eqref{cond-p}. If $2s-\frac Np\ne 1$ (resp. $2s-\frac Np=1$) then the weak solution of \eqref{ellip-pro} belongs to $C^{0,2s-\frac Np}(\bOm)$ (resp. $C_{\star}^1(\bOm)$), where $C_{\star}^1(\bOm)$ is the H\"older-Zygmund space.

\item If $g\in L^\infty(\Omega)$, then $u\in \cap_{\varepsilon>0}C^{0,2s-\varepsilon}(\bOm)$.
\end{enumerate}
\end{corollary}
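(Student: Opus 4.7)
The plan is to reduce the semilinear problem to a purely linear one with a bounded right-hand side, and then transfer the regularity question to the existing linear H\"older theory for the spectral fractional Dirichlet Laplacian on smooth domains.

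Concretely I would proceed in three steps. First, the hypotheses of Theorem \ref{theo-bound} are in force, so the weak solution $u$ lies in $L^\infty(\Omega)$ with $\|u\|_{L^\infty(\Omega)} \le C\|g\|_{L^p(\Omega)}$. Second, since $f(x,\cdot)$ is odd and monotone increasing for a.e.\ $x\in\Omega$, one has the pointwise bound $|f(x,u(x))|\le f(x,\|u\|_{L^\infty(\Omega)})$ for a.e.\ $x\in\Omega$, so that assumption \eqref{con-f} applied with $\alpha=\|u\|_{L^\infty(\Omega)}$ yields $f(\cdot,u(\cdot))\in L^\infty(\Omega)\subset L^p(\Omega)$. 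Setting $G:=g-f(\cdot,u)$, the function $u$ then solves in the weak sense the purely linear Dirichlet problem $(-\Delta_D)^s u=G$ in $\Omega$ with $u=0$ on $\pOm$, where $G\in L^p(\Omega)$ (respectively $G\in L^\infty(\Omega)$ for part (b)).

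Third, I would invoke the $L^p$-to-H\"older mapping properties of the inverse spectral fractional Laplacian $(-\Delta_D)^{-s}$ on smooth bounded domains, available for instance through Grubb's pseudodifferential calculus for spectral operators \cite{Gru2} or, equivalently, via the Caffarelli-Silvestre extension combined with Schauder-type estimates along the lines of \cite{CaSt}. These deliver $u\in C^{0,2s-N/p}(\bOm)$ whenever $2s-N/p$ is not an integer, and $u\in C^1_\ast(\bOm)$ in the borderline case $2s-N/p=1$, proving (a). Part (b) then follows by letting $p\to\infty$ in (a): since $g\in L^\infty(\Omega)\subset L^p(\Omega)$ for every $p\in(1,\infty)$, one obtains $u\in C^{0,2s-\varepsilon}(\bOm)$ for every $\varepsilon>0$.

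The main obstacle I anticipate is not the semilinear bootstrap, which becomes essentially automatic once Theorem \ref{theo-bound} is in hand, but rather the careful formulation of the linear regularity statement at the critical value $2s-N/p=1$, where the naive H\"older exponent equals $1$ and must be replaced by membership in the Zygmund class $C^1_\ast(\bOm)$. All subcritical exponents reduce directly to standard $L^p\to C^{0,\sigma}$ mapping properties of $(-\Delta_D)^{-s}$ on smooth domains.
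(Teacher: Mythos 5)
Your proposal follows the paper's proof essentially verbatim: both first obtain $u\in L^\infty(\Omega)$ via Theorem \ref{theo-bound}, then use \eqref{con-f} (together with the oddness and monotonicity of $f(x,\cdot)$) to conclude $f(\cdot,u(\cdot))\in L^\infty(\Omega)$, and finally rewrite the equation as $(-\Delta_D)^s u = g - f(\cdot,u)$ with right-hand side in $L^p(\Omega)$ (resp.\ $L^\infty(\Omega)$) and invoke the linear regularity result of \cite[Corollary 3.5]{Gru2}, which the paper cites explicitly. The only stylistic difference is that you supply the pointwise bound $|f(x,u(x))|\le f(x,\|u\|_{L^\infty(\Omega)})$ to justify the $L^\infty$ control of $f(\cdot,u)$, which the paper leaves implicit, and you mention the Caffarelli--Silvestre Schauder route as an alternative to Grubb's calculus; neither changes the argument in substance.
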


\begin{proof}
Let Assumption \ref{assum3} hold and that $f$ satisfies \eqref{con-f}. 
Let $g\in L^p(\Omega)$ with $p$ as in part (a) or part  (b). Then by Theorem \ref{theo-bound} the solution $u\in L^\infty(\Omega)$. Hence,  by \eqref{con-f} we have that the function $f(\cdot,u(\cdot))\in L^\infty(\Omega)$. Let then $h:=g-f(\cdot,u(\cdot))$. Then $h$ belongs to same space as the function $g$ and $u$ is a weak solution of the Dirichlet problem 
\begin{align*}
(-\Delta_D)^su=h\;\;\mbox{ in }\;\Omega,\; u=0\;\mbox{ on }\;\pOm.
\end{align*}
Now the regularity of $u$ given in part (a) and part (b) follows from \cite[Corollary 3.5]{Gru2}. 
\end{proof}

For all the results presented so far, Assumption \ref{assum3} is sufficient. However, to show higher regularity in $\mathbb H^{2s+\beta}(\Omega)$ with $0 \le \beta < 1$ and for the discretization error estimates in the sequel, we need an assumption on the local Lipschitz continuity of the nonlinearity in addition.

\begin{assumption}\label{assum-lip}
	For all $M>0$ there exists a constant $L_{M}>0$ such that $f$ satisfies 
	\begin{align*}
		|f(x,u_1)-f(y,u_2)|\leq L_{M}|u_1-u_2|
	\end{align*}
	for a.e. $x, y\in\Omega$ and $u_i\in\mathbb{R}$ with $|u_i|\leq M$, $i=1,2$. 
\end{assumption}

The following result will be frequently used throughout the paper.

\begin{lemma}\label{lem-f-lip}
Let $0\le \beta< 1$ and assume that $f$ satisfies Assumption \ref{assum-lip}. Then for every $u\in\mathbb H^\beta(\Omega)\cap L^\infty(\Omega)$, we have that $f(\cdot,u(\cdot))\in\mathbb H^\beta(\Omega)$. 
\end{lemma}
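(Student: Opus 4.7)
The plan is to combine the pointwise Lipschitz bound on $f(\cdot,u)$ inherited from Assumption \ref{assum-lip} with the characterization \eqref{inf} of $\mathbb{H}^\beta(\Omega)$ in terms of standard fractional Sobolev spaces.

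First I would set $M:=\|u\|_{L^\infty(\Omega)}$ and invoke Assumption \ref{assum-lip} together with the identity $f(x,0)=0$ from Assumption \ref{assum2} to obtain the pointwise inequalities
\begin{equation*}
|f(x,u(x))|\le L_M|u(x)|\qquad\text{and}\qquad |f(x,u(x))-f(y,u(y))|\le L_M|u(x)-u(y)|
\end{equation*}
for a.e.\ $x,y\in\Omega$. The first estimate immediately yields $\|f(\cdot,u)\|_{L^2(\Omega)}\le L_M\|u\|_{L^2(\Omega)}$, which settles the case $\beta=0$. For $\beta\in(0,1)$, squaring the second inequality and integrating it against the kernel $|x-y|^{-N-2\beta}$ gives
\begin{equation*}
\int_{\Omega}\int_{\Omega}\frac{|f(x,u(x))-f(y,u(y))|^2}{|x-y|^{N+2\beta}}\;dxdy\le L_M^2\int_{\Omega}\int_{\Omega}\frac{|u(x)-u(y)|^2}{|x-y|^{N+2\beta}}\;dxdy<\infty,
\end{equation*}
so $f(\cdot,u)\in H^\beta(\Omega)$.

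It remains to promote $H^\beta$-membership to $\mathbb{H}^\beta$-membership, and here I would split into the three subcases suggested by \eqref{inf}. If $0<\beta<1/2$, then $\mathbb{H}^\beta(\Omega)=H^\beta(\Omega)$ and the previous step is already enough. If $\beta=1/2$, I would verify the Hardy-type weighted integrability defining $H_{00}^{1/2}(\Omega)$ by applying the pointwise bound $|f(\cdot,u)|^2\le L_M^2|u|^2$ and the fact that $u\in\mathbb{H}^{1/2}(\Omega)=H_{00}^{1/2}(\Omega)$ already satisfies $\int_\Omega u^2/\mathrm{dist}(\cdot,\partial\Omega)\,dx<\infty$. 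If $1/2<\beta<1$, I would rely on the well-known characterization that, for such $\beta$, a function $v\in H^\beta(\Omega)$ belongs to $H_0^\beta(\Omega)$ if and only if its extension by zero $\tilde v$ lies in $H^\beta(\mathbb{R}^N)$. Extending $u$ by zero to $\tilde u\in H^\beta(\mathbb{R}^N)$ and using $f(\cdot,0)=0$, the zero-extension of $f(\cdot,u)$ coincides a.e.\ with $f(\cdot,\tilde u)$, and repeating the Gagliardo estimate over $\mathbb{R}^N\times\mathbb{R}^N$ (controlled by $L_M[\tilde u]_{H^\beta(\mathbb{R}^N)}$) yields the conclusion.

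The main obstacle I expect is the range $\beta\in(1/2,1)$, where the nontrivial issue is preserving the vanishing trace on $\partial\Omega$ under composition with $f$; once the zero-extension characterization of $H_0^\beta(\Omega)$ (available precisely because $\beta\ne 1/2$) is invoked, the problem reduces to exactly the same Lipschitz--Gagliardo estimate already carried out on $\Omega$. The $\beta=1/2$ case is mildly delicate as well but is handled directly by the Hardy weight, since the Lipschitz bound transfers the weighted $L^2$-control from $u$ to $f(\cdot,u)$ without any boundary considerations.
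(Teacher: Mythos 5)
Your proof follows the same strategy as the paper: derive the pointwise Lipschitz estimates $|f(\cdot,u)|\le L_M|u|$ and $|f(x,u(x))-f(y,u(y))|\le L_M|u(x)-u(y)|$, deduce $L^2$-membership (the $\beta=0$ case) and the Gagliardo seminorm bound for $0<\beta<1$, and then split into the three subcases of \eqref{inf}, using the Hardy weight for $\beta=\tfrac12$. The only substantive difference is the case $\tfrac12<\beta<1$: the paper terse\-ly asserts that $f(\cdot,u)\in\mathbb{H}^\beta(\Omega)=H_0^\beta(\Omega)$ follows ``by approximation if necessary,'' while you make this explicit via the zero-extension characterization of $H_0^\beta(\Omega)$ for $\beta\ne\tfrac12$, exploiting $f(x,0)=0$ so that the zero extension of $f(\cdot,u)$ matches $f(\cdot,\tilde u)$ and the Gagliardo estimate runs over $\mathbb{R}^N\times\mathbb{R}^N$. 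Your version is the more concrete and checkable argument; the one thing to flag is that this zero-extension characterization of $H_0^\beta(\Omega)$ requires some boundary regularity (Lipschitz suffices), which the lemma does not explicitly assume but which holds in every application of this lemma in the paper (Corollary~\ref{ellip-regula}, Lemma~\ref{lemma:linear}).
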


\begin{proof}
We notice that if $\beta=0$ then there is nothing to prove.
Let then $0< \beta< 1$ and $u\in \mathbb H^\beta(\Omega)\cap L^\infty(\Omega)$. Since $f(x,0)=0$,  $u\in L^2(\Omega)$, $|u(x)|\le M$ for a.e. $x\in\Omega$, for some constant $M>0$, we have that  (by Assumption \ref{assum-lip})
\begin{align}\label{eq-lip}
|f(x,u(x))|=|f(x,u(x))-f(x,0)|\le L_M|u(x)|\;\;\mbox{ for a.e. }\;x\in\Omega.
\end{align}
This implies that $f(\cdot,u(\cdot))\in L^2(\Omega)$.  Assumption \ref{assum-lip} also implies that 
\begin{align}\label{eq-lip2}
|f(x,u(x))-f(x,u(y))|\le L_M|u(x)-u(y)|\;\;\mbox{ for a.e. }\;x, y\in\Omega.
\end{align}
We have the following three cases.
\begin{itemize}
\item If $\beta=\frac 12$, then using \eqref{eq-lip} we obtain that
\begin{align*}
\int_{\Omega}\frac{|f(x,u(x))|^2}{\mbox{dist}(x,\pOm)}\;dx\le L_M^2\int_{\Omega}\frac{|u(x)|^2}{\mbox{dist}(x,\pOm)}\;dx<\infty.
\end{align*}
Hence, $f(\cdot,u(\cdot))\in\mathbb H^{\frac 12}(\Omega)$.

\item If $0<\beta<\frac 12$, then it follows from \eqref{eq-lip2} that
\begin{align}\label{eq-Int}
\int_{\Omega}\int_{\Omega}\frac{|f(x,u(x))-f(y,u(y))|^2}{|x-y|^{N+2\beta}}\;dxdy\le L_M^2\int_{\Omega}\int_{\Omega}\frac{|u(x)-u(y)|^2}{|x-y|^{N+2\beta}}\;dxdy<\infty,
\end{align}
and this implies that $f(\cdot,u(\cdot))\in H^\beta(\Omega)=\mathbb H^\beta(\Omega)$.

\item If $\frac 12<\beta<1$, then the estimate \eqref{eq-Int} also holds and this implies that $f(\cdot,u(\cdot))\in H^\beta(\Omega)$. Since $f(x,0)=0$ for a.e. $x\in\Omega$, we also get that $f(\cdot,u(\cdot))\in \mathbb H^\beta(\Omega)$ by approximation if necessary.
\end{itemize}
The proof of the lemma is finished.
\end{proof}

We have the following elliptic regularity.

\begin{corollary}[Regularity: $\Omega$ Lipschitz]\label{ellip-regula}
Let $\Omega\subset\RR^N$ be a bounded open set with Lipschitz continuous boundary. Assume Assumptions \ref{assum3} and \ref{assum-lip} are fulfilled. In addition, let $0\le \beta< 1$, $g\in \mathbb H^\beta(\Omega)\cap L^p(\Omega)$ with $p$ as in \eqref{cond-p}and let $u\in\mathbb H^s(\Omega)$ be the weak solution of \eqref{ellip-pro}. Then $u\in \mathbb H^{2s+\beta}(\Omega)$.
\end{corollary}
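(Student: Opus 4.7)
The plan is to reduce the nonlinear problem to a linear fractional Dirichlet problem with a right-hand side that enjoys $\mathbb H^\beta$-regularity, and then to conclude by the spectral characterization of the space $\mathbb H^{2s+\beta}(\Omega)$.

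First, the integrability hypothesis $g\in L^p(\Omega)$ with $p$ as in \eqref{cond-p} together with Assumption \ref{assum3} allows us to invoke Theorem \ref{theo-bound}: the weak solution $u\in\cV$ belongs to $L^\infty(\Omega)$, so there is some $M>0$ with $|u(x)|\le M$ for a.e. $x\in\Omega$. Since $u\in\mathbb H^s(\Omega)\subset\mathbb H^\beta(\Omega)$ whenever $\beta\le s$, and in the complementary range $\beta>s$ the required regularity will be inherited from the linear step below, I can in either case apply Lemma \ref{lem-f-lip} to the triple $(u,M,\beta')$ for any $\beta'\in[0,\min(s,\beta)]$ to start bootstrapping. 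Concretely, Assumption \ref{assum-lip} and $u\in L^\infty(\Omega)\cap \mathbb H^s(\Omega)$ yield $f(\cdot,u(\cdot))\in \mathbb H^{\min(s,\beta)}(\Omega)$.

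Next, set $h:=g-f(\cdot,u(\cdot))$. By the definition of weak solution \eqref{form-ws} and the characterization \eqref{norm-2} of the $\mathbb H^s$-norm via spectral powers, $u$ solves in the spectral sense the linear Dirichlet problem
\begin{equation*}
(-\Delta_D)^s u = h\quad\text{in }\Omega,\qquad u=0\quad\text{on }\partial\Omega.
\end{equation*}
Expanding $u=\sum_n u_n\varphi_n$ and $h=\sum_n h_n\varphi_n$ in the eigenbasis of $-\Delta_D$ yields $u_n=\lambda_n^{-s}h_n$, hence
\begin{equation*}
\|u\|_{\mathbb H^{2s+\gamma}(\Omega)}^2 = \sum_{n=1}^\infty \lambda_n^{2s+\gamma}|u_n|^2 = \sum_{n=1}^\infty \lambda_n^{\gamma}|h_n|^2 = \|h\|_{\mathbb H^{\gamma}(\Omega)}^2
\end{equation*}
for every $\gamma\ge 0$. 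This is the linear regularity shift: the spectral definition of $\mathbb H^s(\Omega)$ makes it automatic and requires no regularity of $\partial\Omega$ beyond what is already needed for $-\Delta_D$ to be well-defined.

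Combining these ingredients, I would proceed by a short bootstrap in two or three steps. Writing $\beta_0:=\min(s,\beta)$, Lemma \ref{lem-f-lip} gives $f(\cdot,u)\in\mathbb H^{\beta_0}(\Omega)$; since $g\in \mathbb H^\beta(\Omega)\subset\mathbb H^{\beta_0}(\Omega)$, we have $h\in\mathbb H^{\beta_0}(\Omega)$, and the spectral identity upgrades $u$ to $\mathbb H^{2s+\beta_0}(\Omega)$. If $\beta_0=\beta$, we are done. Otherwise $\beta_0=s<\beta$, and the updated $u$ now lies in $\mathbb H^{2s+s}(\Omega)\subset\mathbb H^{\beta}(\Omega)\cap L^\infty(\Omega)$, so a second application of Lemma \ref{lem-f-lip} gives $f(\cdot,u)\in\mathbb H^\beta(\Omega)$, hence $h\in\mathbb H^\beta(\Omega)$, and the spectral identity with $\gamma=\beta$ delivers $u\in\mathbb H^{2s+\beta}(\Omega)$.

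The only non-routine point is bookkeeping the three cases $0<\beta<\tfrac12$, $\beta=\tfrac12$, $\tfrac12<\beta<1$ in Lemma \ref{lem-f-lip}, which is precisely where the Lipschitz regularity of $\partial\Omega$ enters (through the identification of $\mathbb H^\beta(\Omega)$ with the Sobolev–Slobodeckij / $H_{00}^{1/2}$ spaces used there). No additional smoothness of $\partial\Omega$ is needed, since the spectral step above is independent of the geometry of $\Omega$.
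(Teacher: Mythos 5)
Your overall strategy — pass to the linear problem $(-\Delta_D)^s u = h$ with $h := g - f(\cdot,u)$, read off the regularity shift directly from the spectral expansion, and use Lemma \ref{lem-f-lip} to keep $f(\cdot,u)$ in the right Sobolev class — is the same route the paper takes, so the skeleton is sound. However, there is a genuine gap in the termination of your bootstrap: the inclusion $\mathbb H^{2s+s}(\Omega)\subset\mathbb H^\beta(\Omega)$ that you invoke in the second step requires $3s\ge\beta$, and this can fail badly. For example, with $s=0.1$ and $\beta=0.9$ you only reach $\mathbb H^{0.3}(\Omega)$ after your second step, far short of $\mathbb H^{0.9}(\Omega)$, so Lemma \ref{lem-f-lip} cannot yet be applied at level $\beta$. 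A ``two or three step'' bootstrap is therefore insufficient when $2s<1$.

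The fix is exactly what the paper's proof does in its second case: iterate. Set $s_0 := 2s$ (using $f(\cdot,u)\in L^2(\Omega)$ for the first step, or $s_0 := 2s+\min(s,\beta)$ as you do) and define $s_{k+1} := 2s + \min(s_k,\beta)$. Each pass through Lemma \ref{lem-f-lip} with index $\min(s_k,\beta)<1$ followed by the spectral identity upgrades $u$ from $\mathbb H^{s_k}(\Omega)$ to $\mathbb H^{s_{k+1}}(\Omega)$, and since each iteration adds $2s>0$ until $\min(s_k,\beta)$ saturates at $\beta$, the sequence reaches $\mathbb H^{2s+\beta}(\Omega)$ after roughly $\lceil\beta/(2s)\rceil$ steps — finitely many, but potentially many more than three. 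You should replace the final paragraph of your argument with this explicit iteration (or at least point out that the number of steps depends on $s$ and is finite), rather than asserting $\mathbb H^{3s}(\Omega)\subset\mathbb H^\beta(\Omega)$.
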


\begin{proof}
In view of the assumption on $f$ and $g$, it follows from Proposition \ref{prop-exis} and Theorem \ref{theo-bound} that the system \eqref{ellip-pro} has a unique weak solution $u\in\mathbb H^s(\Omega)\cap L^\infty(\Omega)$. Since  $f(\cdot,u(\cdot))\in L^2(\Omega)$ (by Lemma \ref{lem-f-lip}) we have that $g-f(\cdot,u(\cdot))\in L^2(\Omega)$ then
\begin{equation}\label{eq:un}
	u_n=\lambda_n^{-s}\int_{\Omega}(g-f(\cdot,u))\varphi_n,\quad n\in\N.
\end{equation}
Using the $\mathbb{H}^{2s}$ norm definition we arrive at
\[
 \|u\|_{\mathbb{H}^{2s}(\Omega)}^2 = \| g - f(\cdot,u) \|_{L^2(\Omega)}^2 ,
\]
i.e., $u\in \mathbb H^{2s}(\Omega)\cap L^\infty(\Omega)$ (see also \cite[Section 2 pp.772-773]{CaSt}). We have two cases.
\begin{itemize}
\item If $2s\ge 1$, then $u\in\mathbb H^\beta(\Omega)$  (recall that $0<\beta< 1$) and hence, $f(\cdot,u(\cdot))\in \mathbb H^\beta(\Om)$ by Lemma \ref{lem-f-lip}. We have shown that  $g-f(\cdot,u(\cdot))\in \mathbb H^{\beta}(\Omega)$. Since $g-f(\cdot,u(\cdot))\in \mathbb{H}^\beta(\Omega)$, using \eqref{eq:un} and the definition of $\mathbb{H}^{2s+\beta}$ we obtain
\begin{align*}
	\|u\|_{\mathbb{H}^{2s+\beta}(\Omega)}^2&=\sum_{n=1}^\infty u_n^2\lambda_n^{2s+\beta}=\sum_{n=1}^\infty \left(\lambda_n^{-s}\int_{\Omega}(g-f(\cdot,u))\varphi_n\right)^2\lambda_n^{2s+\beta}\\
	&=\|g-f(\cdot,u)\|_{\mathbb{H}^{\beta}(\Omega)}^2,
\end{align*}
and we have shown that $u\in \mathbb H^{2s+\beta}(\Omega)$ (see also e.g. \cite[Section 2]{CaSt}).

\item If $2s<1$, then $f(\cdot,u(\cdot))\in \mathbb H^{2s}(\Omega)$ (by Lemma \ref{lem-f-lip}) and this implies that $g-f(\cdot,u(\cdot))\in \mathbb H^{\min\{2s,\beta\}}(\Omega)$. As above we then get that $u\in \mathbb H^{2s+\min\{2s,\beta\}}(\Omega)$. Repeating the  same argument with $2s+\min\{2s,\beta\}$ in place of $2s$ and so on, we can arrive that in fact $g-f(\cdot,u(\cdot))\in \mathbb H^{\beta}(\Omega)$ and as above this implies that $u\in \mathbb H^{2s+\beta}(\Omega)$.
\end{itemize}
The proof is finished. 
\end{proof}

We conclude this section with the following example.

\begin{example}
Let $q\in [1,\infty)$ and let $b: \Omega\to (0,\infty)$ be a  function in $L^\infty(\Omega)$, that is, $b(x)> 0$ for a.e. $x\in\Omega$. Define the function $f:\Omega\times\mathbb R\to\mathbb R$ by $f(x,t)=b(x)|t|^{q-1}t$. It is clear that $f$ satisfies Assumption \ref{assum2} and the associated function $F:\Omega\times\mathbb R\to [0,\infty)$ is given by $F(x,t)=\frac{1}{q+1}b(x)|t|^{q+1}$. For a.e. $x\in\Omega$, the inverse $\widetilde f(x,\cdot)$ of $f(x,\cdot)$ is given by $\widetilde f(x,t)=\left(b(x)\right)^{-\frac 1q}|t|^{\frac{1-q}{q}}t$. Therefore, the complementary function $\widetilde F$ of $F$ is given by $\widetilde F(x,t)=\frac{q}{q+1}\left(b(x)\right)^{-\frac 1q}|t|^{\frac{q+1}{q}}$. Hence,
\begin{align*}
tf(x,t)=(q+1) F(x,t)\;\mbox{ and }\; t\widetilde f(x,t)=\frac{q+1}{q}\widetilde F(x,t),
\end{align*}
and we have shown that Assumption \ref{assum3} is also satisfied. Moreover, we have that $f$ satisfies \eqref{con-f} in Corollary \ref{cor-212}.
In particular, if $b(x)=C$ for a.e. $x\in\Omega$, for some constant $C>0$, then the function $f$ also satisfies Assumption \ref{assum-lip}.
\end{example}

\section{The extended problem in the sense of Caffarelli and Silvestre}\label{s:CS}

In case that the nonlinearity $f(x,t)$ is identically zero, it is well known that 
problem \eqref{ellip-pro} can equivalently be posed on a semi-infinite cylinder. This approach is originally due to Caffarelli and Silvestre \cite{Caf3}. While they assume the unbounded domain $\RR^N$, the restriction to bounded domains was considered in \cite{CT:10,CDDS:11,ST:10}. We mention that for the existence and uniqueness of solutions to the problem on this semi-infinite cylinder it is sufficient to consider an open set with a Lipschitz continuous boundary, see \cite[Theorem~2.5]{CaSt} for details.
We operate under the same setup in the present section.
Since we will send the non-linearity in \eqref{ellip-pro} to its right hand side, it is straightforward to introduce the extended problem in the semi-linear case.

We begin by introducing the required notation. In the following, we denote by $\mathcal{C}$ the aforementioned semi-infinite cylinder with base $\Omega$, i.e., $\mathcal{C}=\Omega\times(0,\infty)$, and its lateral boundary by $\partial_L\mathcal{C}:=\partial\Omega\times[0,\infty)$. For later purposes, we also introduce for any $\mathpzc{Y}>0$ a truncation of the cylinder $\mathcal{C}$ by $\mathcal{C}_\mathpzc{Y}:=\Omega\times (0,\mathpzc{Y})$. Similar to the lateral boundary $\partial_L\mathcal{C}_\mathpzc{Y}$, we set $\partial_L\mathcal{C}_\mathpzc{Y}:=\partial\Omega\times[0,\mathpzc{Y}]$. Consequently, the semi-infinite cylinder and its truncated version are objects defined in $\mathbb{R}^{N+1}$. Throughout the remaining part of the paper, $y$ denotes the extended variable, such that a vector $x'\in\mathbb{R}^{N+1}$ admits the representation $x'=(x_1,\ldots,x_N,x_{N+1})=(x,x_{N+1})=(x,y)$ with $x_i\in \mathbb{R}$ for $i=1,\ldots,N+1$, $x\in\mathbb{R}^N$ and $y\in \mathbb{R}$.

Due to the degenerate/singular nature of the extended problem by Caffarelli and Silvestre, it will be necessary to discuss the solvability of this problem in certain weighted Sobolev spaces with weight function $y^\alpha$, $\alpha\in(-1,1)$, see \cite[Section~2.1]{Turesson}, \cite{KO84} and \cite[Theorem~1]{GU} for a more sophisticated discussion of such spaces. In this regard, let $\mathcal{D}\subset \mathbb{R}^{N}\times[0,\infty)$ be an open set, such as $\mathcal{C}$ or $\mathcal{C}_\mathpzc{Y}$, then we define the weighted space $L^2(y^\alpha,\mathcal{D})$ as the space of all measurable functions defined on $\mathcal{D}$ with finite norm $\|w\|_{L^2(y^\alpha,\mathcal{D})}:=\|y^{\alpha/2}w\|_{L^2(\mathcal{D})}$. Similarly, using a standard multi-index notation, the space $H^1(y^\alpha,\mathcal{D})$ denotes the space of all measurable functions $w$ on $\mathcal{D}$ whose weak derivatives $D^\delta w$ exist for $|\delta|=1$ and fulfill
\[
	\|w\|_{H^1(y^\alpha,\mathcal{D})}:=\left(\sum_{|\delta|\leq 1}\|D^\delta w\|^2_{L^2(y^\alpha,\mathcal{D})}\right)^{1/2}<\infty.
\]
To study the extended problems we also need to introduce the space
\[
  \mathring{H}^1_L(y^\alpha,\mathcal{C}):=\{w\in H^1(y^\alpha,\mathcal{C}):w=0\text{ on } \partial_L\mathcal{C}\}.
\]
The space $\mathring{H}^1_L(y^\alpha,\mathcal{C}_\mathpzc{Y})$ is defined in an analogous manner.
Formally, we need to indicate the trace of a function on $\Omega$ by introducing the trace mapping on $\Omega$. However, we skip this notation since it will be clear whenever we speak about traces.

Now, the extended problem reads as follows: Given $g\in \mathcal{V}^\star$, find $\mathcal{U}\in \mathring{H}^1_L(y^\alpha,\mathcal{C})$ such that 
\begin{equation}\label{eq:extendedweak}
\int_\mathcal{C} y^\alpha \nabla \mathcal{U}\cdot\nabla \Phi\;dxdy+d_s\int_{\Omega}f(x,\mathcal{U})\Phi\;dx=d_s\langle g,\Phi	\rangle_{\cV^\star,\cV}\quad \forall \Phi\in\mathring{H}^1_L(y^\alpha,\mathcal{C})
\end{equation}
with $\alpha=1-2s$ and $d_s=2^\alpha \frac{\Gamma(1-s)}{\Gamma(s)}$, where we recall that $0<s<1$. That is, the function $\mathcal U\in \mathring{H}^1_L(y^\alpha,\mathcal{C})$ is a weak solution of the following  problem
\begin{equation}\label{edp}
\begin{cases}
\mbox{div}(y^\alpha\nabla \mathcal U)=0\;\;&\mbox{ in}\;\mathcal C\\
\frac{\partial\mathcal U}{\partial\nu^\alpha}+d_sf(x,\mathcal U)=d_s g\;\;\;&\mbox{ on }\;\Omega\times\{0\},
\end{cases}
\end{equation}
where we have set
\begin{align*}
\frac{\partial\mathcal U}{\partial\nu^\alpha}(x,0)=\lim_{y\to 0}y^\alpha\mathcal U_y(x,y)=\lim_{y\to 0}y^\alpha\frac{\partial\mathcal U(x,y)}{\partial y}.
\end{align*}

We have the following result.

\begin{lemma}\label{lem:CS}
Let Assumption \ref{assum3} on $f$  be fulfilled and $g\in \mathcal{V}^\star$ with $\mathcal{V}:=\mathbb H^{s}(\Omega)\cap L_F(\Omega)$ as defined at the beginning of Section \ref{sec:weaksol}. Then there exists a unique weak solution $\mathcal{U}\in \mathcal V_L:= \left\{v\in \mathring{H}^1_L(y^\alpha,\mathcal{C}): v|_{\Omega\times\{0\}}\in \mathcal{V} \right\}$ of \eqref{eq:extendedweak}. Furthermore, there holds $\mathcal{U}(\cdot,0)=u\in \mathcal{V}$, where $u$ represents the weak solution of \eqref{ellip-pro} according to \eqref{form-ws}.
\end{lemma}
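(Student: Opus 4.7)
The plan is to mimic the proof of Proposition~\ref{prop-exis} in the extended setting by invoking the Browder--Minty theorem on the reflexive Banach space $\mathcal{V}_L$, and then identify the trace with the weak solution $u$ via the well-known Dirichlet-to-Neumann identity characterizing the spectral fractional Laplacian.

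First I would equip $\mathcal{V}_L$ with the norm $\|\Phi\|_{\mathcal{V}_L}:=\|\Phi\|_{H^1(y^\alpha,\mathcal{C})}+\|\Phi(\cdot,0)\|_{F,\Omega}$ and check it is a reflexive Banach space (the $H^1(y^\alpha,\mathcal{C})$ piece is reflexive by standard weighted space theory, the Orlicz piece is reflexive under Assumption~\ref{assum3}, and the trace map into $\mathcal{V}$ is continuous by the Caffarelli--Silvestre trace theorem). Then I would define $\widetilde{\mathcal{F}}:\mathcal{V}_L\to\mathcal{V}_L^\star$ by
\begin{align*}
\langle\widetilde{\mathcal{F}}(\mathcal{U}),\Phi\rangle:=\int_\mathcal{C} y^\alpha \nabla \mathcal{U}\cdot\nabla \Phi\,dxdy+d_s\int_{\Omega}f(x,\mathcal{U}(\cdot,0))\Phi(\cdot,0)\,dx.
\end{align*}
Boundedness of $\widetilde{\mathcal{F}}$ follows from Cauchy--Schwarz on the weighted integral together with the improved H\"older inequality \eqref{hold} and Lemma~\ref{lem:hoelder} applied to the trace $\mathcal{U}(\cdot,0)\in\mathcal{V}$. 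Strict monotonicity is immediate from $|\nabla(\mathcal{U}-\mathcal{V})|^2\ge 0$ combined with strict monotonicity of $f(x,\cdot)$; hemi-continuity follows from dominated convergence exactly as in Proposition~\ref{prop-exis}.

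Next I would verify coercivity. The weighted Poincar\'e/trace inequality on the cylinder (Caffarelli--Silvestre) gives a constant such that $\|\Phi\|_{H^1(y^\alpha,\mathcal{C})}^2$ is comparable to $\|\nabla\Phi\|_{L^2(y^\alpha,\mathcal{C})}^2$, which in turn bounds $d_s\|\Phi(\cdot,0)\|_{\mathbb{H}^s(\Omega)}^2$ from below via the minimization characterization of the $s$-harmonic extension. Combined with \eqref{coer-est} applied to the Orlicz part, this yields $\langle\widetilde{\mathcal{F}}(\mathcal{U}),\mathcal{U}\rangle/\|\mathcal{U}\|_{\mathcal{V}_L}\to\infty$. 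Browder--Minty then produces a unique $\mathcal{U}\in\mathcal{V}_L$ with $\widetilde{\mathcal{F}}(\mathcal{U})=d_s\, g\circ\mathrm{tr}$ in $\mathcal{V}_L^\star$, i.e., \eqref{eq:extendedweak}.

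It remains to show $\mathcal{U}(\cdot,0)=u$. For this I would let $u\in\mathcal{V}$ be the unique weak solution produced by Proposition~\ref{prop-exis}, denote by $\widetilde{u}\in\mathring{H}^1_L(y^\alpha,\mathcal{C})$ its $s$-harmonic extension (whose existence and membership in $\mathcal{V}_L$ is standard, with $\widetilde{u}(\cdot,0)=u$), and recall the fundamental identity
\begin{align*}
\int_\mathcal{C} y^\alpha\nabla\widetilde{u}\cdot\nabla\Phi\,dxdy=d_s\int_\Omega (-\Delta_D)^{s/2}u\,(-\Delta_D)^{s/2}\Phi(\cdot,0)\,dx
\end{align*}
for every $\Phi\in\mathring{H}^1_L(y^\alpha,\mathcal{C})$ (cf.~\cite{CaSt,CDDS:11,ST:10}). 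Substituting into \eqref{form-ws} shows that $\widetilde{u}$ satisfies \eqref{eq:extendedweak}, and by the uniqueness already established we conclude $\mathcal{U}=\widetilde{u}$, hence $\mathcal{U}(\cdot,0)=u\in\mathcal{V}$.

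The main obstacle is the coercivity step: one must combine the weighted Dirichlet energy (which controls only the $\mathbb{H}^s$-seminorm of the trace) with the Orlicz functional bound \eqref{coer-est} on the trace, and handle these two pieces jointly so that blow-up of $\|\mathcal{U}\|_{\mathcal{V}_L}$ forces blow-up of at least one of the two natural energies. All other ingredients are direct adaptations of the proof of Proposition~\ref{prop-exis}.
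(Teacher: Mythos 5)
Your proof is correct and follows essentially the same route as the paper: both invoke the Browder--Minty theorem on the reflexive space $\mathcal{V}_L$ after checking that the nonlinear form is bounded, hemi-continuous, strictly monotone, and coercive (the coercivity handling via the weighted Dirichlet energy controlling the $\mathbb{H}^s$-trace seminorm together with \eqref{coer-est} is the right joint treatment). The one place you add content is the trace identification, which you argue explicitly by constructing the $s$-harmonic extension of $u$ and using the Dirichlet-to-Neumann identity plus uniqueness; the paper instead deduces $\mathcal{U}(\cdot,0)=u$ directly by citing the corresponding identity for the linear problem, which is a shortcut to the same fact.
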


\begin{proof}
We already know that if the solution $\mathcal{U}\in\mathring{H}^1_L(y^\alpha,\mathcal{C})$ of \eqref{eq:extendedweak} exists then $\mathcal U(\cdot,0)=u\in\mathbb H^s(\Omega)$. This is a trivial consequence of the corresponding result for linear problems. Therefore, we just have to prove the existence and uniqueness part.
Let us set
\begin{align*}
\mathcal E(\mathcal U,\Phi):=\int_\mathcal{C} y^\alpha \nabla \mathcal{U}\cdot\nabla \Phi\;dxdy+d_s\int_{\Omega}f(x,\mathcal{U})\,\Phi\;dx,\;\;\;\mathcal U,\Phi\in\mathcal V_L.
\end{align*}
Next, let $\mathcal U\in \mathcal{V}_L$ be fixed. It is clear that $\mathcal E(\mathcal U,\cdot)$ is linear in the second variable. Proceeding exactly as in the proof of Proposition \ref{prop-exis}, we get that $\mathcal E(\mathcal U,\cdot)\in \mathcal V_L^\star$. In addition, we have that $\mathcal E$ is strictly monotone, hemi-continuous and coercive. This finishes the proof.
\end{proof}

In contrast to the nonlocal fractional Dirichlet problem \eqref{ellip-pro}, the extended problem \eqref{eq:extendedweak} (or equivalently \eqref{edp}) is localized such that a discretization by standard finite elements becomes feasible. However, a direct discretization is still challenging due to the semi-infinite computational domain. As remedy, one can employ the exponential decay of the solution $\mathcal{U}$ in certain norms as $y$ tends to infinity, see \cite{NOS}. In this regard, a truncation of the semi-infinite cylinder is reasonable. This leads to a problem posed on the truncated cylinder $\mathcal{C}_\mathpzc{Y}$:
Given $g\in \mathcal{V}^\star$, find 
\begin{align*}
\mathcal{U}_\mathpzc{Y}\in \mathcal V_{L,\mathpzc{Y}}= \left\{v\in \mathring{H}^1_L(y^\alpha,\mathcal{C}_\mathpzc{Y}): v|_{\Omega\times\{0\}}\in \mathcal{V} \right\}
\end{align*}
 such that 
\begin{equation}\label{eq:truncatedweak}
\int_{\mathcal{C}_\mathpzc{Y}} y^\alpha \nabla \mathcal{U}_\mathpzc{Y}\cdot\nabla \Phi\;dxdy+d_s\int_{\Omega}f(x,\mathcal{U}_\mathpzc{Y})\Phi\;dx=d_s\langle g,\Phi	\rangle_{\cV^\star,\cV}\quad \forall \Phi\in\mathring{H}^1_L(y^\alpha,\mathcal{C}_\mathpzc{Y}).
\end{equation}

In view of the discretization error estimates in the next section, we do not need to estimate the truncation error for the semi-linear problems. Instead, we will use the corresponding results for linear problems.

\section{Discretizing the problem and proof of error estimates}\label{s:disc}
The discretization of the linear problem is outlined in \cite{NOS}. In fact, the theory there will build the basis for the discussion of the semi-linear problems presented in the further course of this section. For the convenience of the reader we will collect the main ingredients from the linear case before we turn towards the treatment of the semi-linear problems. From here on, we assume that the underlying domain $\Omega$ is convex and polyhedral. We notice that such a domain has a Lipschitz continuous boundary, see e.g. \cite{Chen}.

Due to the singular behavior of the solution towards the boundary $\Omega$, anistropically refined meshes are preferable since these can be used to compensate the singular effects. In our context such meshes are defined as follows:
Let $\mathscr{T}_\Omega=\{K\}$ be a conforming and quasi-uniform triangulation of $\Omega$, where $K\in \mathbb{R}^N$ is an element that is isoparametrically equivalent either to the unit cube or to the unit simplex in $\mathbb{R}^N$. We assume $\# \mathscr{T}_\Omega \sim M^N$. Thus, the element size $h_{\mathscr{T}_\Omega}$ fulfills $h_{\mathscr{T}_\Omega}\sim M^{-1}$. The collection of all these meshes is denoted by $\mathbb{T}_\Omega$. Furthermore, let $\mathcal{I}_\mathpzc{Y}=\{I\}$ be a graded mesh of the interval $[0,\mathpzc{Y}]$ in the sense that $[0,\mathpzc{Y}]=\bigcup_{k=0}^{M-1}[y_k,y_{k+1}]$ with
\[
y_k=\left(\frac{k}{M}\right)^\gamma\mathpzc{Y},\quad k=0,\ldots,M,\quad \gamma>\frac{3}{1-\alpha}=\frac{3}{2s}>1.
\]
Now, the triangulations $\mathscr{T}_\mathpzc{Y}$ of the cylinder $\mathcal{C}_\mathpzc{Y}$ are constructed as tensor product triangulations by means of $\mathscr{T}_\Omega$ and $\mathcal{I}_\mathpzc{Y}$. The definitions of both imply $\# \mathscr{T}_\mathpzc{Y} \sim M^{N+1}$.
Finally, the collection of all those anisotropic meshes $\mathscr{T}_\mathpzc{Y}$ is denoted by 
$\mathbb{T}$.

Now, we define the finite element spaces posed on the previously introduced 
meshes. 
For every $\mathscr{T}_\mathpzc{Y}\in \mathbb{T}$ the finite element spaces 
$\mathbb{V}(\mathscr{T}_\mathpzc{Y})$ are now defined by
\[
\mathbb{V}(\mathscr{T}_\mathpzc{Y}):=\{\Phi\in C^0(\overline{ \mathcal{C}_\mathpzc{Y}}):\Phi|_{T}\in\mathcal{P}_1(K)\oplus\mathbb{P}_1(I)\ \forall \;T=K\times I\in \mathscr{T}_\mathpzc{Y},\ \Phi|_{\partial_L\mathcal{C}_\mathpzc{Y}}=0\}.
\]
In case that $K$ in the previous definition is a simplex then $\mathcal{P}_1(K)=\mathbb{P}_1(K)$, the set of polynomials of degree at most $1$. If $K$ is a cube then $\mathcal{P}_1(K)$ equals $\mathbb{Q}_1(K)$, the set of polynomials of degree at most 1 in 
each variable.

Throughout the remainder of the paper, without any mention, $0<s<1$, $\alpha=1-2s$ and $d_s=2^{\alpha}\frac{\Gamma(1-s)}{\Gamma(s)}$.

Using the just introduced notation, the finite element discretization of \eqref{eq:truncatedweak} is given by the function $\mathcal{U}_{\mathscr{T}_\mathpzc{Y}}\in\mathbb{V}(\mathscr{T}_\mathpzc{Y})$ which solves the variational identity
\begin{equation}\label{eq:truncateddiscrete}
\int_{\mathcal{C}_\mathpzc{Y}} y^\alpha \nabla \mathcal{U}_{\mathscr{T}_\mathpzc{Y}}\cdot\nabla \Phi\;dxdy+d_s\int_{\Omega}f(x,\mathcal{U}_{\mathscr{T}_\mathpzc{Y}})\Phi\;dx=d_s\langle g,\Phi	\rangle_{\cV^\star,\cV}\quad \forall \Phi\in\mathbb{V}(\mathscr{T}_\mathpzc{Y}).
\end{equation}

We have the following result.

\begin{lemma}
Let Assumption \ref{assum3} on $f$ be fulfilled and $g\in \mathcal{V}^\star$. Then there exists a unique solution $\mathcal{U}_{\mathscr{T}_\mathpzc{Y}}\in\mathbb{V}(\mathscr{T}_\mathpzc{Y})$ of \eqref{eq:truncateddiscrete}.
\end{lemma}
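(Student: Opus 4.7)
The plan is to transcribe the Browder-Minty argument of Proposition~\ref{prop-exis} and Lemma~\ref{lem:CS} to the finite-dimensional subspace $\mathbb{V}(\mathscr{T}_\mathpzc{Y})$. The crucial preliminary point is that $\mathbb{V}(\mathscr{T}_\mathpzc{Y})\subset\cV_{L,\mathpzc{Y}}$, which ensures the nonlinear term in \eqref{eq:truncateddiscrete} is well defined; once this is in place, strict monotonicity, coercivity, hemi-continuity, and boundedness of the induced operator follow by direct transcription of the continuous arguments.

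First I would verify the embedding $\mathbb{V}(\mathscr{T}_\mathpzc{Y})\subset\cV_{L,\mathpzc{Y}}$. Any $\Phi\in\mathbb{V}(\mathscr{T}_\mathpzc{Y})$ is a continuous, piecewise tensor polynomial vanishing on $\partial_L\mathcal{C}_\mathpzc{Y}$, and since $y^\alpha\in L^1_{\mathrm{loc}}([0,\mathpzc{Y}])$ for $\alpha\in(-1,1)$ one has $\Phi\in\mathring{H}^1_L(y^\alpha,\mathcal{C}_\mathpzc{Y})$. Its trace $\Phi(\cdot,0)$ is a bounded, Lipschitz piecewise polynomial on $\bar\Omega$ vanishing on $\partial\Omega$; hence it lies in $H_0^1(\Omega)\hookrightarrow\mathbb H^s(\Omega)$, and Assumption~\ref{assum3} forces $F(\cdot,\Phi(\cdot,0))\in L^\infty(\Omega)\subset L^1(\Omega)$, so $\Phi(\cdot,0)\in\cV$ and $f(\cdot,\Phi(\cdot,0))\in L_{\widetilde F}(\Omega)$ by Lemma~\ref{lem:hoelder}. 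Consequently the form
\[
\mathcal{E}_{\mathscr{T}_\mathpzc{Y}}(\mathcal{U},\Phi):=\int_{\mathcal{C}_\mathpzc{Y}}y^\alpha\nabla\mathcal{U}\cdot\nabla\Phi\,dxdy+d_s\int_{\Omega}f(x,\mathcal{U})\Phi\,dx
\]
is well defined on $\mathbb{V}(\mathscr{T}_\mathpzc{Y})\times\mathbb{V}(\mathscr{T}_\mathpzc{Y})$, and \eqref{eq:truncateddiscrete} becomes $\mathcal{E}_{\mathscr{T}_\mathpzc{Y}}(\mathcal{U}_{\mathscr{T}_\mathpzc{Y}},\Phi)=d_s\langle g,\Phi\rangle_{\cV^\star,\cV}$ for all $\Phi\in\mathbb{V}(\mathscr{T}_\mathpzc{Y})$.

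Next I would verify the four Browder-Minty hypotheses for the operator associated with $\mathcal{E}_{\mathscr{T}_\mathpzc{Y}}$ on the (trivially reflexive) space $\mathbb{V}(\mathscr{T}_\mathpzc{Y})$. Boundedness of $\mathcal{E}_{\mathscr{T}_\mathpzc{Y}}(\mathcal{U},\cdot)$ follows from Cauchy-Schwarz on the weighted gradient together with the Luxemburg-H\"older inequality \eqref{hold} and Lemma~\ref{lem:hoelder}; strict monotonicity follows from the strict monotonicity of $f(x,\cdot)$ combined with positivity of the weighted Dirichlet form; hemi-continuity follows from continuity of $f(x,\cdot)$ and dominated convergence, the uniform bounds required being automatic from finite-dimensional convergence. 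Coercivity uses $f(x,u)u\ge 0$, a weighted Poincar\'e-type inequality on $\mathring{H}^1_L(y^\alpha,\mathcal{C}_\mathpzc{Y})$, the $(\triangle_2)$-condition, and \eqref{coer-est} exactly as in Proposition~\ref{prop-exis}, yielding
\[
\lim_{\|\mathcal{U}\|_{\cV_{L,\mathpzc{Y}}}\to\infty}\frac{\mathcal{E}_{\mathscr{T}_\mathpzc{Y}}(\mathcal{U},\mathcal{U})}{\|\mathcal{U}\|_{\cV_{L,\mathpzc{Y}}}}=\infty.
\]
The Browder-Minty theorem then delivers the unique $\mathcal{U}_{\mathscr{T}_\mathpzc{Y}}\in\mathbb{V}(\mathscr{T}_\mathpzc{Y})$ solving \eqref{eq:truncateddiscrete}.

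The only subtle step is the containment $\mathbb{V}(\mathscr{T}_\mathpzc{Y})\subset\cV_{L,\mathpzc{Y}}$; everything else is a verbatim replay of the continuous arguments. As an alternative one could bypass Browder-Minty entirely by observing that, since $F(x,\cdot)$ is convex, the functional
\[
J(\Phi)=\tfrac12\int_{\mathcal{C}_\mathpzc{Y}}y^\alpha|\nabla\Phi|^2\,dxdy+d_s\int_{\Omega}F(x,\Phi)\,dx-d_s\langle g,\Phi\rangle_{\cV^\star,\cV}
\]
is strictly convex, coercive and continuous on the finite-dimensional space $\mathbb{V}(\mathscr{T}_\mathpzc{Y})$, hence admits a unique minimizer whose Euler-Lagrange equation is precisely \eqref{eq:truncateddiscrete}.
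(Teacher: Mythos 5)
Your proof is correct and takes essentially the same route as the paper: the paper cites Browder's fixed-point theorem (using monotonicity of $f$) for existence and runs a separate coercivity-plus-monotonicity argument for uniqueness, while you fold both into a single Browder--Minty application via strict monotonicity, and on a finite-dimensional space these are interchangeable. One small imprecision: Assumption~\ref{assum3} alone does not give $F(\cdot,\Phi(\cdot,0))\in L^\infty(\Omega)$ for a bounded trace unless $F(\cdot,M)$ is bounded in $x$; what is actually needed is only $F(\cdot,\Phi(\cdot,0))\in L^1(\Omega)$, i.e.\ $L^\infty(\Omega)\hookrightarrow L_F(\Omega)$, which is the standard implicit normalization for these Musielak--Orlicz spaces and is what makes the nonlinear term in \eqref{eq:truncateddiscrete} well defined on $\mathbb{V}(\mathscr{T}_\mathpzc{Y})$. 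Your closing convex-minimization alternative on the finite-dimensional space is also valid and is arguably the cleanest argument of all.
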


\begin{proof}
	The existence of a solution can be proven by means of Browder's fixed-point theorem employing the monotonicity of the nonlinearity $f$. The uniqueness is a consequence of the $\mathring{H}^1_L(y^\alpha,\mathcal{C}_\mathpzc{Y})$-coercivity of the bilinearform in \eqref{eq:truncateddiscrete} and the monotonicity of $f$. Indeed, let $\mathcal{U}_1$ and $\mathcal{U}_2$ be two different solutions of \eqref{eq:truncateddiscrete}. Then we infer that there exists a constant $c>0$ such that
	\begin{align*}
		\|\mathcal{U}_1- & \mathcal{U}_2\|^2_{H^1_L(y^\alpha,\mathcal{C}_\mathpzc{Y})}\leq \\ 
	 &	c\left(\int_{\mathcal{C}_\mathpzc{Y}} y^\alpha |\nabla (\mathcal{U}_1-\mathcal{U}_2)|^2\;dxdy+d_s\int_{\Omega}(f(x,\mathcal{U}_1)-f(\cdot,\mathcal{U}_2))(\mathcal{U}_1-\mathcal{U}_2)\;dx\right)=0.
	\end{align*}
	Hence, $\mathcal U_1=\mathcal U_2$ and the proof is finished.
\end{proof}

For the error analysis it will be useful to have the intermediate solution $\tilde{\mathcal{U}}_{\mathscr{T}_\mathpzc{Y}}\in\mathbb{V}(\mathscr{T}_\mathpzc{Y})$ which solves the variational identity

\begin{equation}\label{eq:truncateddiscreteintermediate}
\int_{\mathcal{C}_\mathpzc{Y}} y^\alpha \nabla \tilde{\mathcal{U}}_{\mathscr{T}_\mathpzc{Y}}\cdot\nabla \Phi\;dxdy=d_s\langle g-f(\cdot,u),\Phi	\rangle_{\cV^\star,\cV}\quad \forall \Phi\in\mathbb{V}(\mathscr{T}_\mathpzc{Y}),
\end{equation}
where $u$ denotes the weak solution of \eqref{ellip-pro}. Since $\tilde{\mathcal{U}}_{\mathscr{T}_\mathpzc{Y}}$ represents the solution of a linear problem, corresponding error estimates are directly applicable.

\begin{lemma}\label{lemma:linear}
Let Assumptions \ref{assum3} and \ref{assum-lip} on $f$ be fulfilled and $g\in\mathbb{H}^{1-s}(\Omega)\cap L^p(\Omega)$ with $p$ as in \eqref{cond-p}. Moreover, let $u$ be the solution of \eqref{ellip-pro} and $\tilde{\mathcal{U}}_{\mathscr{T}_\mathpzc{Y}}$  the solution of \eqref{eq:truncateddiscreteintermediate}. Then there is a constant $c>0$ such that 
\[
\|u-\tilde{\mathcal{U}}_{\mathscr{T}_\mathpzc{Y}}\|_{H^s(\Omega)}\leq c|\log(\# \mathscr{T}_\mathpzc{Y})|^s(\# \mathscr{T}_\mathpzc{Y})^{-1/(N+1)}
\]
and
\[
\|u-\tilde{\mathcal{U}}_{\mathscr{T}_\mathpzc{Y}}\|_{L^2(\Omega)}\leq c|\log(\# \mathscr{T}_\mathpzc{Y})|^{2s}(\# \mathscr{T}_\mathpzc{Y})^{-(1+s)/(N+1)}
\]
provided that $\mathpzc{Y}\sim \log(\# \mathscr{T}_\mathpzc{Y})$.
\end{lemma}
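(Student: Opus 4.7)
The plan is to recognize that the intermediate equation \eqref{eq:truncateddiscreteintermediate} is, by construction, a purely linear Galerkin problem. Rewriting \eqref{ellip-pro} as $(-\Delta_D)^s u = g-f(\cdot,u)$ and invoking (the linear version of) Lemma \ref{lem:CS}, the associated extended solution $\tilde{\mathcal U}\in\mathring{H}^1_L(y^\alpha,\mathcal C)$ with source $g-f(\cdot,u)$ has trace $u$ on $\Omega\times\{0\}$; moreover, $\tilde{\mathcal U}_{\mathscr T_\mathpzc Y}$ defined by \eqref{eq:truncateddiscreteintermediate} is precisely the standard finite element approximation, on the truncated cylinder $\mathcal C_\mathpzc Y$, of this linear extended problem. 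A triangle inequality then splits the error $u-\tilde{\mathcal U}_{\mathscr T_\mathpzc Y}(\cdot,0)$ into a truncation error between $\mathcal C$ and $\mathcal C_\mathpzc Y$ (which decays exponentially in $\mathpzc Y$) plus a discrete approximation error on $\mathcal C_\mathpzc Y$, each of which is covered by existing linear theory.

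The core task is therefore to verify that the source $g-f(\cdot,u)$ enjoys the regularity $\mathbb H^{1-s}(\Omega)$ required by the linear estimates of \cite{NOS}. By hypothesis $g\in \mathbb H^{1-s}(\Omega)\cap L^p(\Omega)$ with $p$ as in \eqref{cond-p}, so Proposition \ref{prop-exis} and Theorem \ref{theo-bound} supply a unique weak solution $u\in \mathbb H^s(\Omega)\cap L^\infty(\Omega)$. Since $1-s\in[0,1)$, applying Corollary \ref{ellip-regula} with $\beta=1-s$ yields $u\in \mathbb H^{1+s}(\Omega)$, hence in particular $u\in \mathbb H^{1-s}(\Omega)\cap L^\infty(\Omega)$. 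Assumption \ref{assum-lip} together with Lemma \ref{lem-f-lip} then gives $f(\cdot,u)\in \mathbb H^{1-s}(\Omega)$, so indeed $g-f(\cdot,u)\in \mathbb H^{1-s}(\Omega)$.

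With this in hand, the plan is to invoke the linear finite element theory of \cite{NOS} for the source $g - f(\cdot,u)$: the energy-norm estimate on the tensor-product graded mesh (whose grading parameter $\gamma>3/(2s)$ was chosen precisely to offset the singular weight $y^\alpha$ near $y=0$) yields, after taking traces, the $H^s(\Omega)$ bound with rate $|\log(\#\mathscr T_\mathpzc Y)|^s(\#\mathscr T_\mathpzc Y)^{-1/(N+1)}$; the $L^2(\Omega)$ bound with the improved rate follows by an Aubin--Nitsche duality argument applied to the same linear problem. The choice $\mathpzc Y\sim \log(\#\mathscr T_\mathpzc Y)$ balances the exponentially decaying truncation error against the polynomial discretization rate, so that the truncation contribution is absorbed without deteriorating the overall order.

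The main (and essentially only) obstacle is the regularity bootstrap $u\in \mathbb H^{1-s}(\Omega)\cap L^\infty(\Omega)$, needed in order to apply Lemma \ref{lem-f-lip} to the nonlinearity; but Corollary \ref{ellip-regula} already delivers this for every $s\in(0,1)$, so the remainder of the proof amounts to a verbatim transfer of the known linear a priori estimates of \cite{NOS}, with the sole modification that the source is now $g-f(\cdot,u)$ rather than $g$.
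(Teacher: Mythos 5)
Your proposal is correct and takes essentially the same route as the paper: both identify that the only nontrivial point is to verify that the effective linear source $g - f(\cdot,u)$ lies in $\mathbb{H}^{1-s}(\Omega)$, which is obtained by combining Theorem \ref{theo-bound} (giving $u\in L^\infty(\Omega)$), Corollary \ref{ellip-regula} with $\beta=1-s$ (giving $u\in\mathbb{H}^{1+s}(\Omega)\hookrightarrow\mathbb{H}^{1-s}(\Omega)$), and Lemma \ref{lem-f-lip} (giving $f(\cdot,u)\in\mathbb{H}^{1-s}(\Omega)$); the energy and $L^2$ estimates are then cited from the linear theory exactly as in the paper (\cite[Theorem 5.4, Remark 5.5]{NOS} and \cite[Proposition 4.7]{NOS3}). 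Your additional commentary on the truncation/discretization split, the grading parameter, and the Aubin--Nitsche duality correctly unpacks what those references contain but is not needed beyond the citation.
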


\begin{proof}
 This is a consequence of \cite[Theorem 5.4 and Remark 5.5]{NOS} and \cite[Proposition 4.7]{NOS3} once we know that $f(\cdot,u)\in\mathbb{H}^{1-s}(\Omega)$. Since $g\in \mathbb{H}^{1-s}(\Omega)\cap L^p(\Omega)$ with $p$ as in \eqref{cond-p}, it follows from Corollary \ref{ellip-regula} that the unique weak solution $u$ belongs to $\mathbb H^{1+s}(\Omega)\hookrightarrow \mathbb H^{1-s}(\Omega)$ and hence $f(\cdot,u(\cdot))\in\mathbb H^{1-s}(\Omega)$ according to Lemma \ref{lem-f-lip}. This finishes the proof.
\end{proof}

For later purposes, we need to show that $\tilde{\mathcal{U}}_{\mathscr{T}_\mathpzc{Y}}$ is uniformly bounded in $L^\infty(\Omega)$, since we only assume a local Lipschitz condition for the nonlinearity $f$.

\begin{lemma}\label{lemma:Linftybounded}
Let Assumptions \ref{assum3} and \ref{assum-lip} on $f$ be fulfilled and $g\in \mathbb H^{1-s}(\Omega)\cap L^p(\Omega)$ with $p$ as in \eqref{cond-p}. Furthermore, let $s>(N-2)/2$. Then the solution $\tilde{\mathcal{U}}_{\mathscr{T}_\mathpzc{Y}}$ of \eqref{eq:truncateddiscreteintermediate} is uniformly bounded in $L^\infty(\Omega)$.
\end{lemma}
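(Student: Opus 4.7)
The plan is to reduce the $L^\infty$-bound to the already known $L^2$-error estimate from Lemma~\ref{lemma:linear}, the price being a discrete inverse inequality in space that the assumption $s>(N-2)/2$ is precisely designed to absorb.

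First I collect the needed regularity. By Theorem~\ref{theo-bound} and Corollary~\ref{ellip-regula} (applied with $\beta=1-s$, using $g\in\mathbb{H}^{1-s}(\Omega)\cap L^p(\Omega)$), the continuous solution $u$ of \eqref{ellip-pro} lies in $\mathbb{H}^{1+s}(\Omega)\cap L^\infty(\Omega)$; since $1+s>N/2$ under the standing hypothesis $s>(N-2)/2$, Sobolev embedding also places $u\in C^0(\overline{\Omega})$. Let $\mathbb{U}(\mathscr{T}_\Omega)$ denote the standard $\mathcal{P}_1/\mathbb{Q}_1$ finite element space on $\Omega$ obtained as the trace of $\mathbb{V}(\mathscr{T}_\mathpzc{Y})$ on $\Omega\times\{0\}$; its mesh size satisfies $h\sim M^{-1}\sim(\#\mathscr{T}_\mathpzc{Y})^{-1/(N+1)}$. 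Because $\mathscr{T}_\Omega$ is quasi-uniform I will use the standard inverse estimate $\|v_h\|_{L^\infty(\Omega)}\leq c\,h^{-N/2}\|v_h\|_{L^2(\Omega)}$ for $v_h\in\mathbb{U}(\mathscr{T}_\Omega)$, and, with $I_h$ the nodal Lagrange interpolant onto $\mathbb{U}(\mathscr{T}_\Omega)$, the standard properties $\|I_h u\|_{L^\infty(\Omega)}\leq\|u\|_{L^\infty(\Omega)}$ and $\|u-I_h u\|_{L^2(\Omega)}\leq c\,h^{1+s}\|u\|_{\mathbb{H}^{1+s}(\Omega)}$; note that $I_h u\in\mathbb{U}(\mathscr{T}_\Omega)$ since $u$ vanishes on $\partial\Omega$.

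The key step is then to split
\[
\|\tilde{\mathcal{U}}_{\mathscr{T}_\mathpzc{Y}}(\cdot,0)\|_{L^\infty(\Omega)}\leq \|\tilde{\mathcal{U}}_{\mathscr{T}_\mathpzc{Y}}(\cdot,0)-I_h u\|_{L^\infty(\Omega)}+\|I_h u\|_{L^\infty(\Omega)},
\]
to observe that the first difference belongs to $\mathbb{U}(\mathscr{T}_\Omega)$, and to apply the inverse estimate followed by the triangle inequality:
\[
\|\tilde{\mathcal{U}}_{\mathscr{T}_\mathpzc{Y}}(\cdot,0)-I_h u\|_{L^\infty(\Omega)}\leq c\,h^{-N/2}\Bigl(\|\tilde{\mathcal{U}}_{\mathscr{T}_\mathpzc{Y}}(\cdot,0)-u\|_{L^2(\Omega)}+\|u-I_h u\|_{L^2(\Omega)}\Bigr).
\]
Inserting Lemma~\ref{lemma:linear} and the interpolation estimate, the two contributions on the right are bounded respectively by $c\,|\log(\#\mathscr{T}_\mathpzc{Y})|^{2s}(\#\mathscr{T}_\mathpzc{Y})^{(N-2-2s)/(2(N+1))}$ and $c\,h^{1+s-N/2}$.

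The main (and essentially only) obstacle is now purely arithmetic: to check that both exponents $(N-2-2s)/(2(N+1))$ and $1+s-N/2$ are strictly negative precisely when $s>(N-2)/2$; this is exactly where the hypothesis on $s$ is used, and nowhere else. Up to the harmless logarithmic factor, both contributions therefore remain bounded (and actually tend to zero) as $\#\mathscr{T}_\mathpzc{Y}\to\infty$, and together with $\|I_h u\|_{L^\infty(\Omega)}\leq\|u\|_{L^\infty(\Omega)}$ this yields the desired uniform $L^\infty$-bound on $\tilde{\mathcal{U}}_{\mathscr{T}_\mathpzc{Y}}(\cdot,0)$, with a constant depending only on $\|u\|_{L^\infty(\Omega)}$ and $\|u\|_{\mathbb{H}^{1+s}(\Omega)}$.
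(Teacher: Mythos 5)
Your proof is correct and follows essentially the same route as the paper's: bound the discrete solution in $L^\infty$ by an interpolant plus a finite-element function, apply a spatial inverse inequality $\|v_h\|_{L^\infty}\lesssim h^{-N/2}\|v_h\|_{L^2}$, and feed in the $L^2$ estimate of Lemma~\ref{lemma:linear} using $h\sim M^{-1}$, $\#\mathscr{T}_\mathpzc{Y}\sim M^{N+1}$, with $s>(N-2)/2$ exactly making the exponents negative. The only noteworthy difference is the choice of interpolant: the paper uses the (modified) Clement interpolant, which is well defined and $L^\infty$-stable already for $u\in\mathbb{H}^s(\Omega)$, so it need not invoke continuity of $u$; you instead use the nodal Lagrange interpolant and therefore must first argue $u\in C^0(\overline\Omega)$ via $u\in\mathbb{H}^{1+s}(\Omega)$ and $1+s>N/2$, which you correctly observe is the same restriction on $s$. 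Your version is also slightly more careful than the paper's display, which silently drops the $h^{-N/2}\|u-I_{\mathscr{T}_\Omega}u\|_{L^2(K_*)}$ contribution after the triangle inequality; you track it explicitly with the interpolation estimate $\|u-I_h u\|_{L^2}\lesssim h^{1+s}\|u\|_{\mathbb{H}^{1+s}}$ and verify that $h^{1+s-N/2}$ stays bounded. Both choices are valid here since the regularity you cite (Theorem~\ref{theo-bound} and Corollary~\ref{ellip-regula} with $\beta=1-s$) is exactly what the paper uses in the proof of Lemma~\ref{lemma:linear}.
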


\begin{proof}
We denote by $I_{\mathscr{T}_\Omega}u$ the (modified) Clement interpolant of $u$, which is well defined for $u\in\mathbb{H}^{s}(\Omega)$. Next, let $K_*\in \mathscr{T}_\Omega$ be the element where $|u-I_{\mathscr{T}_\Omega}u|$ admits its supremum. By means of an inverse inequality, we deduce
\begin{align}
	\|u-\tilde{\mathcal{U}}_{\mathscr{T}_\mathpzc{Y}}\|_{L^\infty(\Omega)}&=\|u-\tilde{\mathcal{U}}_{\mathscr{T}_\mathpzc{Y}}\|_{L^\infty(K_*)}\leq \|u-I_{\mathscr{T}_\Omega}u\|_{L^\infty(K_*)}+\|I_{\mathscr{T}_\Omega}u-\tilde{\mathcal{U}}_{\mathscr{T}_\mathpzc{Y}}\|_{L^\infty(K_*)}\notag\\
	&\leq c\left(\|u-I_{\mathscr{T}_\Omega}u\|_{L^\infty(K_*)}+h_{K_*}^{-N/2}\|u-\tilde{\mathcal{U}}_{\mathscr{T}_\mathpzc{Y}}\|_{L^2(K_*)}\right),\label{eq:linfty}
\end{align}
where $h_{K_*}$ denotes the diameter of $K$. The first term in \eqref{eq:linfty} is bounded due to Theorem \ref{theo-bound}. For the second one, we notice that $h_{K_*}\sim h_{\mathcal{T}_\Omega}\sim M^{-1}$ and $\# \mathscr{T}_\mathpzc{Y}\sim M^{N+1}$. Consequently, the assertion follows from Lemma \ref{lemma:linear}.
\end{proof}

\begin{lemma}\label{lemma:intermediate}
Let Assumptions \ref{assum3} and \ref{assum-lip} on $f$ be fulfilled, $g\in \mathbb H^{1-s}(\Omega)\cap L^p(\Omega)$ with $p$ as in \eqref{cond-p} and $s>(N-2)/2$. Furthermore, let $u$, $\mathcal{U}_{\mathscr{T}_\mathpzc{Y}}$ and $\tilde{\mathcal{U}}_{\mathscr{T}_\mathpzc{Y}}$ be the solutions of \eqref{ellip-pro}, \eqref{eq:truncateddiscrete} and \eqref{eq:truncateddiscreteintermediate}, respectively. Then there is a constant $c>0$ such that
\[
\|\mathcal{U}_{\mathscr{T}_\mathpzc{Y}}-\tilde{\mathcal{U}}_{\mathscr{T}_\mathpzc{Y}}\|_{L^2(\Omega)}\leq c\|u-\tilde{\mathcal{U}}_{\mathscr{T}_\mathpzc{Y}}\|_{L^2(\Omega)}.
\]
\end{lemma}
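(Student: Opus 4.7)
The plan is to subtract the two variational identities for $\mathcal{U}_{\mathscr{T}_\mathpzc{Y}}$ and $\tilde{\mathcal{U}}_{\mathscr{T}_\mathpzc{Y}}$, then exploit (i) the monotonicity of $f$, (ii) the local Lipschitz property from Assumption~\ref{assum-lip}, together with the $L^\infty$-bounds available for $u$ (Theorem~\ref{theo-bound}) and $\tilde{\mathcal{U}}_{\mathscr{T}_\mathpzc{Y}}$ (Lemma~\ref{lemma:Linftybounded}), and (iii) the trace/Poincar\'e inequality connecting the weighted $H^1$-norm on $\mathcal{C}_\mathpzc{Y}$ with the $L^2(\Omega)$-norm of the trace.

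Setting $e:=\mathcal{U}_{\mathscr{T}_\mathpzc{Y}}-\tilde{\mathcal{U}}_{\mathscr{T}_\mathpzc{Y}}\in\mathbb{V}(\mathscr{T}_\mathpzc{Y})$, subtracting \eqref{eq:truncateddiscreteintermediate} from \eqref{eq:truncateddiscrete} and testing with $\Phi=e$ yields
\[
  \int_{\mathcal{C}_\mathpzc{Y}} y^\alpha |\nabla e|^2\,dxdy
  + d_s\!\int_{\Omega}\bigl(f(x,\mathcal{U}_{\mathscr{T}_\mathpzc{Y}})-f(x,\tilde{\mathcal{U}}_{\mathscr{T}_\mathpzc{Y}})\bigr)e\,dx
  = d_s\!\int_{\Omega}\bigl(f(x,u)-f(x,\tilde{\mathcal{U}}_{\mathscr{T}_\mathpzc{Y}})\bigr)e\,dx,
\]
after writing $f(x,\mathcal{U}_{\mathscr{T}_\mathpzc{Y}})-f(x,u)=[f(x,\mathcal{U}_{\mathscr{T}_\mathpzc{Y}})-f(x,\tilde{\mathcal{U}}_{\mathscr{T}_\mathpzc{Y}})]+[f(x,\tilde{\mathcal{U}}_{\mathscr{T}_\mathpzc{Y}})-f(x,u)]$. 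Since $f(x,\cdot)$ is monotone (Assumption~\ref{assum3}), the second term on the left is nonnegative and can be dropped. Next, because $u\in L^\infty(\Omega)$ by Theorem~\ref{theo-bound} and $\tilde{\mathcal{U}}_{\mathscr{T}_\mathpzc{Y}}$ is uniformly bounded in $L^\infty(\Omega)$ by Lemma~\ref{lemma:Linftybounded} (where the hypothesis $s>(N-2)/2$ enters), there is a common bound $M$, and Assumption~\ref{assum-lip} gives
\[
  \Betrag{f(x,u)-f(x,\tilde{\mathcal{U}}_{\mathscr{T}_\mathpzc{Y}})}\le L_M\,\betrag{u-\tilde{\mathcal{U}}_{\mathscr{T}_\mathpzc{Y}}} \quad \text{a.e. in }\Omega.
\]
Applying Cauchy--Schwarz then yields
\[
  \int_{\mathcal{C}_\mathpzc{Y}} y^\alpha |\nabla e|^2\,dxdy
  \le d_s L_M\,\|u-\tilde{\mathcal{U}}_{\mathscr{T}_\mathpzc{Y}}\|_{L^2(\Omega)}\,\|e\|_{L^2(\Omega)}.
\]

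To close the argument, I would invoke the trace inequality for $\mathring{H}^1_L(y^\alpha,\mathcal{C}_\mathpzc{Y})$ (see \cite{CaSt,NOS}), which together with the embedding $\mathbb{H}^s(\Omega)\hookrightarrow L^2(\Omega)$ produces a constant $C>0$ independent of the mesh such that
\[
  \|e\|_{L^2(\Omega)}^2 \le C\,\|e\|_{\mathbb{H}^s(\Omega)}^2 \le C'\!\int_{\mathcal{C}_\mathpzc{Y}} y^\alpha |\nabla e|^2\,dxdy.
\]
Combining the last two displays and dividing by $\|e\|_{L^2(\Omega)}$ gives the claimed estimate. The main subtlety is ensuring that the Lipschitz constant $L_M$ is admissible on both $u$ and $\tilde{\mathcal{U}}_{\mathscr{T}_\mathpzc{Y}}$; this is precisely why Lemma~\ref{lemma:Linftybounded}, and hence the restriction $s>(N-2)/2$ in space dimension $N\ge 3$, is needed here. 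Note that no bound on $\mathcal{U}_{\mathscr{T}_\mathpzc{Y}}$ itself is required because the monotonicity argument eliminates the corresponding term.
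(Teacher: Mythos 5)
Your proof is correct and follows essentially the same approach as the paper: subtracting the two variational identities, splitting the nonlinear difference so that monotonicity eliminates the $\mathcal{U}_{\mathscr{T}_\mathpzc{Y}}$-dependent term, applying the local Lipschitz bound via the $L^\infty$-bounds from Theorem~\ref{theo-bound} and Lemma~\ref{lemma:Linftybounded}, and closing with the trace inequality from \cite[Proposition~2.1]{CDDS:11}. The only cosmetic difference is that you invoke coercivity and the trace estimate at the end rather than at the start, but the argument is identical in substance.
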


\begin{proof}
	Due to the $\mathring{H}^1_L(y^\alpha,\mathcal{C}_\mathpzc{Y})$-coercivity of the bilinear form in \eqref{eq:truncateddiscrete} and \eqref{eq:truncateddiscreteintermediate}, and the monotonicity of $f$, we obtain that there is a constant $c>0$ such that
	
	\begin{align*}
	c\|\mathcal{U}_{\mathscr{T}_\mathpzc{Y}}-\tilde{\mathcal{U}}_{\mathscr{T}_\mathpzc{Y}}\|^2_{H^1_L(y^\alpha,\mathcal{C}_\mathpzc{Y})}&\leq \int_{\mathcal{C}_\mathpzc{Y}} y^\alpha \nabla (\mathcal{U}_{\mathscr{T}_\mathpzc{Y}}-\tilde{\mathcal{U}}_{\mathscr{T}_\mathpzc{Y}})\cdot\nabla (\mathcal{U}_{\mathscr{T}_\mathpzc{Y}}-\tilde{\mathcal{U}}_{\mathscr{T}_\mathpzc{Y}})\\
	&=d_s\int_{\Omega}(f(\cdot,\tilde{\mathcal{U}}_{\mathscr{T}_\mathpzc{Y}})-f(\cdot,\mathcal{U}_{\mathscr{T}_\mathpzc{Y}}))(\mathcal{U}_{\mathscr{T}_\mathpzc{Y}}-\tilde{\mathcal{U}}_{\mathscr{T}_\mathpzc{Y}}) \\
 &\quad +d_s\int_{\Omega}(f(\cdot,u)-f(\cdot,\tilde{\mathcal{U}}_{\mathscr{T}_\mathpzc{Y}}))(\mathcal{U}_{\mathscr{T}_\mathpzc{Y}}-\tilde{\mathcal{U}}_{\mathscr{T}_\mathpzc{Y}})\\
	&\leq d_s\int_{\Omega}(f(\cdot,u)-f(\cdot,\tilde{\mathcal{U}}_{\mathscr{T}_\mathpzc{Y}}))(\mathcal{U}_{\mathscr{T}_\mathpzc{Y}}-\tilde{\mathcal{U}}_{\mathscr{T}_\mathpzc{Y}}).
	\end{align*}
	Next, observe that both $u$ and $\tilde{\mathcal{U}}_{\mathscr{T}_\mathpzc{Y}}$ are uniformly bounded in $L^\infty(\Omega)$ according to Theorem \ref{theo-bound} and Lemma \ref{lemma:Linftybounded}. Consequently, the Cauchy-Schwarz inequality and the Lipschitz-continuity of the nonlinearity yield
	\begin{align}
	\int_{\Omega}(f(\cdot,u)-f(\cdot,\tilde{\mathcal{U}}_{\mathscr{T}_\mathpzc{Y}}))(\mathcal{U}_{\mathscr{T}_\mathpzc{Y}}-\tilde{\mathcal{U}}_{\mathscr{T}_\mathpzc{Y}})&\leq c \|u-\tilde{\mathcal{U}}_{\mathscr{T}_\mathpzc{Y}}\|_{L^2(\Omega)}\|\mathcal{U}_{\mathscr{T}_\mathpzc{Y}}-\tilde{\mathcal{U}}_{\mathscr{T}_\mathpzc{Y}}\|_{L^2(\Omega)}.\label{eq:lipschitz}
	\end{align}
	 Finally, the assertion can be deduced by means of the foregoing inequalities and the trace theorem of \cite[Proposition 2.1]{CDDS:11}, i.e.,
	\[
		\|\mathcal{U}_{\mathscr{T}_\mathpzc{Y}}-\tilde{\mathcal{U}}_{\mathscr{T}_\mathpzc{Y}}\|_{L^2(\Omega)}\leq c \|\mathcal{U}_{\mathscr{T}_\mathpzc{Y}}-\tilde{\mathcal{U}}_{\mathscr{T}_\mathpzc{Y}}\|_{H^1_L(y^\alpha,\mathcal{C}_\mathpzc{Y})},
	\]
	and the proof is finished.
\end{proof}

As a direct consequence of Lemmas \ref{lemma:linear} and \ref{lemma:intermediate}, we obtain the main result of this section.

\begin{theorem}\label{theorem:semilinear}
Let Assumptions \ref{assum3} and \ref{assum-lip} on $f$ be fulfilled, $g\in\mathbb{H}^{1-s}(\Omega)\cap L^p(\Omega)$ with $p$ as in \eqref{cond-p} and let $s>(N-2)/2$.
Moreover, let $u$ be the solution of \eqref{ellip-pro} and $\mathcal{U}_{\mathscr{T}_\mathpzc{Y}}$ the solution of \eqref{eq:truncateddiscrete}. Then there is a constant $c>0$ such that
\[
\|u-\mathcal{U}_{\mathscr{T}_\mathpzc{Y}}\|_{\mathbb H^s(\Omega)}\leq c|\log(\# \mathscr{T}_\mathpzc{Y})|^s(\# \mathscr{T}_\mathpzc{Y})^{-1/(N+1)}
\]
and
\[
\|u-\mathcal{U}_{\mathscr{T}_\mathpzc{Y}}\|_{L^2(\Omega)}\leq c|\log(\# \mathscr{T}_\mathpzc{Y})|^{2s}(\# \mathscr{T}_\mathpzc{Y})^{-(1+s)/(N+1)}
\]
provided that $\mathpzc{Y}\sim \log(\# \mathscr{T}_\mathpzc{Y})$.
\end{theorem}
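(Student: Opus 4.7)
The plan is to exploit the announced decomposition through the intermediate finite element solution $\tilde{\mathcal{U}}_{\mathscr{T}_\mathpzc{Y}}$ defined in \eqref{eq:truncateddiscreteintermediate} together with the triangle inequality. For both target norms I would write
\[
\|u-\mathcal{U}_{\mathscr{T}_\mathpzc{Y}}\|_{X}\leq \|u-\tilde{\mathcal{U}}_{\mathscr{T}_\mathpzc{Y}}\|_{X}+\|\tilde{\mathcal{U}}_{\mathscr{T}_\mathpzc{Y}}-\mathcal{U}_{\mathscr{T}_\mathpzc{Y}}\|_{X},
\]
with $X\in\{\mathbb H^{s}(\Omega),L^{2}(\Omega)\}$, and bound each piece separately.

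For the $L^{2}$-estimate this is immediate: the first summand on the right is controlled by the second bound in Lemma~\ref{lemma:linear} and yields exactly the rate $|\log(\#\mathscr{T}_\mathpzc{Y})|^{2s}(\#\mathscr{T}_\mathpzc{Y})^{-(1+s)/(N+1)}$; the second summand is handled by Lemma~\ref{lemma:intermediate}, which reduces $\|\tilde{\mathcal{U}}_{\mathscr{T}_\mathpzc{Y}}-\mathcal{U}_{\mathscr{T}_\mathpzc{Y}}\|_{L^{2}(\Omega)}$ to $\|u-\tilde{\mathcal{U}}_{\mathscr{T}_\mathpzc{Y}}\|_{L^{2}(\Omega)}$, so the very same rate applies (the hypothesis $s>(N-2)/2$ is needed only to invoke Lemma~\ref{lemma:Linftybounded} inside the proof of Lemma~\ref{lemma:intermediate}; everything else from Assumptions \ref{assum3}, \ref{assum-lip} and the regularity $g\in \mathbb{H}^{1-s}(\Omega)\cap L^{p}(\Omega)$ carries over to the present statement).

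For the $\mathbb H^{s}$-estimate the first summand is controlled directly by the first bound in Lemma~\ref{lemma:linear}, giving the advertised $|\log(\#\mathscr{T}_\mathpzc{Y})|^{s}(\#\mathscr{T}_\mathpzc{Y})^{-1/(N+1)}$. For the second summand, Lemma~\ref{lemma:intermediate} as stated only delivers an $L^{2}(\Omega)$ bound, so I would revisit its proof: the coercivity/monotonicity step there actually produces
\[
\|\tilde{\mathcal{U}}_{\mathscr{T}_\mathpzc{Y}}-\mathcal{U}_{\mathscr{T}_\mathpzc{Y}}\|_{H^{1}_{L}(y^{\alpha},\mathcal{C}_\mathpzc{Y})}\;\leq\; c\,\|u-\tilde{\mathcal{U}}_{\mathscr{T}_\mathpzc{Y}}\|_{L^{2}(\Omega)}
\]
after one application of the trace theorem of \cite[Proposition~2.1]{CDDS:11} to absorb the factor $\|\tilde{\mathcal{U}}_{\mathscr{T}_\mathpzc{Y}}-\mathcal{U}_{\mathscr{T}_\mathpzc{Y}}\|_{L^{2}(\Omega)}$ that appears on the right-hand side. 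A second application of the same trace theorem then yields
\[
\|\tilde{\mathcal{U}}_{\mathscr{T}_\mathpzc{Y}}-\mathcal{U}_{\mathscr{T}_\mathpzc{Y}}\|_{\mathbb H^{s}(\Omega)}\;\leq\; c\,\|u-\tilde{\mathcal{U}}_{\mathscr{T}_\mathpzc{Y}}\|_{L^{2}(\Omega)},
\]
which by Lemma~\ref{lemma:linear} is of order $|\log(\#\mathscr{T}_\mathpzc{Y})|^{2s}(\#\mathscr{T}_\mathpzc{Y})^{-(1+s)/(N+1)}$ and is therefore strictly dominated by the rate arising from the first summand.

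The substantive content of the theorem is thus entirely carried by Lemmas~\ref{lemma:linear} and \ref{lemma:intermediate}; no genuinely new obstacle remains. The only mild subtlety I expect is bookkeeping in the $\mathbb H^{s}$-case, namely making sure that the trace inequality is applied in the right direction twice (once to close the coercivity estimate and once to pass from the cylinder norm down to $\mathbb H^{s}(\Omega)$); everything else is a direct combination of the two lemmas, the triangle inequality and the asymptotic comparison $(\#\mathscr{T}_\mathpzc{Y})^{-(1+s)/(N+1)}\ll(\#\mathscr{T}_\mathpzc{Y})^{-1/(N+1)}$ which makes the intermediate error negligible in the $\mathbb H^{s}$-rate.
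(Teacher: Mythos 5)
Your proposal is correct and follows the same route the paper implicitly takes (the paper only asserts the theorem is ``a direct consequence of Lemmas~\ref{lemma:linear} and~\ref{lemma:intermediate}'' without writing out the argument). You correctly notice the one non-trivial point -- that Lemma~\ref{lemma:intermediate} as stated only gives an $L^2(\Omega)$ bound -- and resolve it properly by extracting the stronger $H^1_L(y^\alpha,\mathcal{C}_\mathpzc{Y})$ bound from its proof and applying the trace inequality of \cite[Proposition~2.1]{CDDS:11} to pass to $\mathbb H^s(\Omega)$, after which the asymptotic comparison of rates (including the log factors, since $|\log(\# \mathscr{T}_\mathpzc{Y})|^{2s}(\# \mathscr{T}_\mathpzc{Y})^{-(1+s)/(N+1)}\lesssim |\log(\# \mathscr{T}_\mathpzc{Y})|^{s}(\# \mathscr{T}_\mathpzc{Y})^{-1/(N+1)}$) closes the argument.
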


We finally illustrate the results of Theorem~\ref{theorem:semilinear} by a numerical example. Let $N = 2$, $\Omega = (0,1)^2$. Under this setting, the eigenvalues and eigenfunctions of $-\Delta_D$ are:
\[
 \lambda_{k,l} = \pi^2 (k^2 + l^2) , \quad \varphi_{k,l}(x_1,x_2) = \sin(k\pi x_1) \sin(l\pi x_2) \quad k, l \in \mathbb{N} .  
\] 
Let the exact solution to \eqref{ellip-pro} be  
\begin{equation}\label{eq:uexact} 
 u = \lambda_{2,2}^{-s} \sin(2\pi x_1) \sin(2\pi x_2)
\end{equation}
and nolinearity $f(\cdot,u) = u^3=|u|^2u$. Using \eqref{ellip-pro} we immediately arrive at the expression for datum $g$. 

We use Newton's method to solve the nonlinear problem. The asymptotic relation $\|u-\mathcal{U}_{\mathscr{T}_\mathpzc{Y}}\|_{\mathbb H^s(\Omega)} \approx (\# \mathscr{T}_\mathpzc{Y})^{-1/3}$ is shown in Figure~\ref{f:HsL2} (left) for different choices of $s = 0.2, 0.4, 0.6$, and $s = 0.8$. We observe a quasi-optimal decay rate which confirms the $\mathbb{H}^s$-estimate in Theorem~\ref{theorem:semilinear}. We also present the $L^2$-error estimates in Figure~\ref{f:HsL2} (right), which decays as $(\# \mathscr{T}_\mathpzc{Y})^{-2/3}$ which is better than our theoretical prediction in Theorem~\ref{theorem:semilinear}. Notice that under the current literature status, theoretically, we cannot expect a better rate than Theorem~\ref{theorem:semilinear}, as we have used the linear result from \cite[Proposition 4.7]{NOS3} to prove Lemma~\ref{lemma:linear}.

\begin{figure}[h!]
\includegraphics[width=0.48\textwidth]{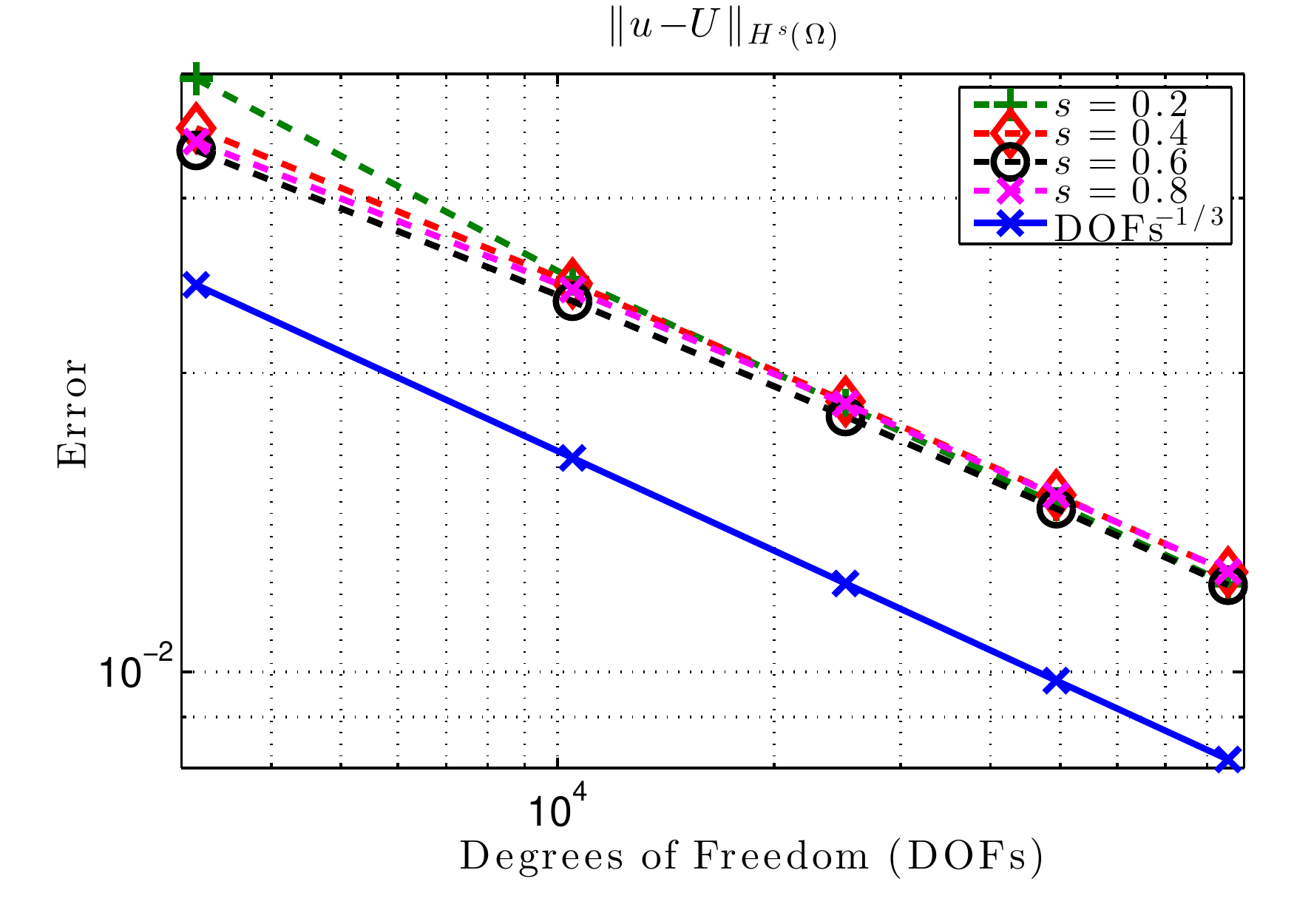} \quad 
\includegraphics[width=0.48\textwidth]{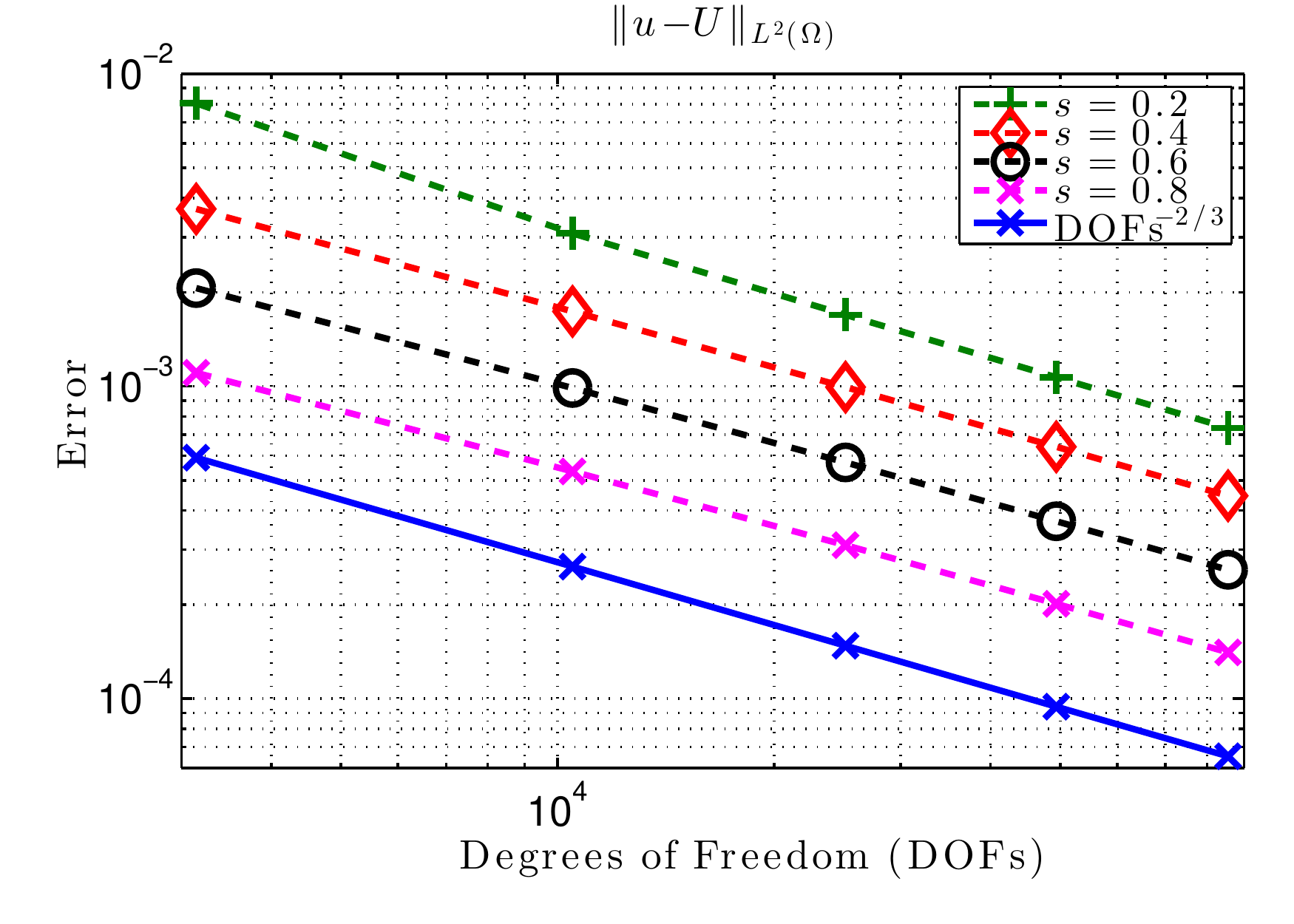} 
\caption{\label{f:HsL2} 
Rate of convergence on anisotropic meshes for $N = 2$ and $s = 0.2,0.4, 0.6$ and $s = 0.8$ is shown. $U$ is the numerical solution to \eqref{eq:truncateddiscrete} obtained by using Newton's method. On the other hand, $u$ is the exact solution given by \eqref{eq:uexact}. The blue line is the reference line. The left panel shows the $\mathbb{H}^s(\Omega)$-error, in all cases we recover $(\# \mathscr{T}_\mathpzc{Y})^{-1/3}$. The right panel shows the $L^2$-error which decays as $(\# \mathscr{T}_\mathpzc{Y})^{-2/3}$.}
\end{figure}

\bibliographystyle{plain}
\bibliography{lit}

\end{document}